\numberwithin{equation}{section}
\pgfplotsset{compat=1.15}
\DeclareMathOperator{\Tr}{Tr}
\newcommand{\cB}{{\mathcal B}}
\newcommand{\cM}{{\mathcal M}}
\newcommand{\cL}{{\mathcal L}}
\newcommand{\F}{{\mathbb F}}
\newcommand{\Fq}{\F_q}
\newcommand{\la}{\langle}
\newcommand{\ra}{\rangle}
\newcommand{\FG}{\PG(2,p^n)}
\newcommand{\FGq}{\PG(2,q^h)}
\newcommand{\Fqh}{\mathbb F_{q^h}}
\DeclareMathOperator{\PG}{{PG}}
\theoremstyle{plain}
\newtheorem{Th}{Theorem}[section]
\newtheorem{Lemma}[Th]{Lemma}
\newtheorem{Cor}[Th]{Corollary}
\newtheorem{Prop}[Th]{Proposition}
\theoremstyle{definition}
\newtheorem{Def}[Th]{Definition}
\newtheorem{Conj}[Th]{Conjecture}
\newtheorem{Rem}[Th]{Remark}
\newtheorem{?}[Th]{Problem}
\newtheorem{Ex}[Th]{Example}
\begin{document}
	
	\title{Small 3-fold blocking sets in $\mathrm{PG}(2,p^n)$}
	
	\author{B. Csajb\'ok \thanks{The first author was supported by the J\'anos Bolyai Research Scholarship of the Hungarian Academy of Sciences and partially by the ELTE TKP 2021-NKTA-62 funding scheme and by the   
			National Research, Development and Innovation Fund -- grant numbers ADVANCED 153080 and  EXCELLENCE 151504.}, M.\ R. Kepes\thanks{The second author was supported by the National Research, Development and Innovation Fund -- grant number ADVANCED 153080.}, E. M. Robin, B. S\'ogor, S. Wang, E. Williams}

	\maketitle
	
	%{\color{red}red - to be deleted}\\
	%{\color{blue}blue - to be checked}\\
	%{\color{green}green - already checked}\\
	%{\color{orange}orange - might need to be deleted?}

	\begin{abstract}
		A \(t\)-fold blocking set of the finite Desarguesian plane $\mathrm{PG}(2,p^n)$, $p$ prime, is a set of points meeting each line of the plane in at least \(t\) points. The minimum size of such sets is of interest for numerous reasons; however, even the minimum size of nontrivial blocking sets (i.e. $1$-fold blocking sets not containing a line) in \(\mathrm{PG}(2,p^n)\) is an open question when $n\geq 5$ is odd. 
		For $n>1$ the conjectured lower bound for this size is $(p^n+p^{n(s-1)/s}+1)$, where $p^{n/s}$ is the size of the largest proper subfield of $\mathbb{F}_{p^n}$. Since the union of $t$ pairwise disjoint nontrivial blocking sets is a $t$-fold blocking set, it is conjectured that when $p^{n/s}$ is large enough w.r.t. $t$, then the minimum size of a \(t\)-fold blocking set in \(\mathrm{PG}(2,p^n)\) is \(t(p^n+p^{n(s-1)/s}+1)\).
		If $n$ is even, then the decomposition of the plane into disjoint Baer subplanes gives a $t$-fold blocking set of this size. However, for odd $n$, the existence of such sets is an unsolved problem in most cases.
		
		In this paper, we construct $3$-fold blocking sets of conjectured size. These blocking sets are obtained as the disjoint union of three linear blocking sets of R\'edei type, and they lie on the same orbit of the projectivity $(x:y:z)\mapsto (z:x:y)$.
	\end{abstract}

	\section{Introduction}
	
	A set of points \(\cB\) in a finite projective plane is a \emph{blocking set} if \(\cB\) meets each line of the plane in at least one point. It is easy to see that the smallest blocking sets are lines. The question naturally arises: can we determine the minimum size of \emph{nontrivial} blocking sets (blocking sets that do not contain a line)? This has proven to be a difficult problem and has been widely studied in finite geometry. Similarly, we can define a \emph{\(t\)-fold blocking set} to be a point set that has at least \(t\) points on each line of the finite projective plane. One may construct such sets by taking the union of \(t\) disjoint nontrivial blocking sets. In this paper, we mainly focus on multiple blocking sets of \(\PG(2,q)\), the projective plane over the finite field of order \(q\). The study of small \(t\)-fold blocking sets has many applications, such as the theory of linear error correcting codes \cite{JWP}, or colorings of projective planes (see Section \ref{subsec:chrom}). Although the minimum size of such sets is of interest, it is unknown in most cases, which leads us to consider the following conjecture, motivated by conjectures of Sziklai from \cite{linconj} and by our Proposition \ref{PoSt2} regarding the exponent of blocking sets:
	\begin{Conj}
		\label{conj0}
		If $q=p^n$, $n>1$, and the size $p^{n/s}$ of the largest subfield of $\F_{p^n}$ is large enough w.r.t. $t$, then the smallest possible size of a $t$-fold blocking set of $\PG(2,p^n)$ is 
		\begin{equation}
			\label{bound}
			t(p^n+p^{n(s-1)/s}+1).
		\end{equation}
	\end{Conj}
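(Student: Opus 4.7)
The statement is a Conjecture, so I will outline a strategy toward its proof. The conjecture splits into two independent halves: a matching lower bound and an explicit construction.

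For the lower bound, the plan is to induct on $t$. In the base case $t=1$, one uses the (known, for many parameter ranges) classification of small nontrivial blocking sets as linear, together with Blokhuis--Storme--Sz\H onyi type polynomial-method estimates, to conclude that any nontrivial blocking set has at least $p^n+p^{n(s-1)/s}+1$ points. For the inductive step, given a hypothetical $t$-fold blocking set $\cB$ of size less than $t(p^n+p^{n(s-1)/s}+1)$, one would peel off a minimum $1$-fold blocking set contained in $\cB$ and apply the bound for $(t-1)$-fold blocking sets to what remains. The ``$p^{n/s}$ large w.r.t.\ $t$'' hypothesis enters here: it must be chosen so that the peeling does not cost too many points on lines originally meeting $\cB$ in exactly $t$ points.

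For the construction, the plan is to exhibit $t$ pairwise disjoint nontrivial blocking sets, each of the minimum size $p^n+p^{n(s-1)/s}+1$, and take their union. When $n$ is even the Baer-subplane decomposition of $\PG(2,p^n)$ settles the matter at once, as recorded in the paper. When $n$ is odd one must use R\'edei-type linear blocking sets: for an $\F_{p^{n/s}}$-linearised polynomial $f\colon\F_{p^n}\to\F_{p^n}$ with associated direction set $D_f$, the set $\cB_f = \{(x:f(x):1):x\in\F_{p^n}\}\cup\{(1:d:0):d\in D_f\}$ has the required cardinality. One then seeks linearised polynomials $f_1,\ldots,f_t$ whose R\'edei blocking sets are pairwise disjoint. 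A powerful simplification, exploited in this paper for $t=3$, is to force all $\cB_{f_i}$ to lie on a single orbit of a cyclic projectivity such as $(x:y:z)\mapsto(z:x:y)$; then pairwise disjointness collapses by symmetry to a single algebraic condition on $f_1$.

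The main obstacle will be verifying this disjointness. It reduces to showing that an explicit system of $\F_{p^{n/s}}$-linear equations over $\F_{p^n}$ has no common solutions and that the direction sets $D_{f_i}$ carve out a line at infinity into $t$ disjoint pieces. For $t=3$ the cyclic symmetry reduces the task to a controllable polynomial-degree argument over $\F_{p^n}$, which is plausibly what the authors carry out; for general $t$ no such compact symmetry is available, which is why the conjecture is presently open in most cases. A secondary difficulty on the lower-bound side is choosing the threshold on $p^{n/s}$ uniformly enough to cover all the intermediate blocking-set sizes arising in the inductive argument.
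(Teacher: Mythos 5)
This statement is a conjecture, and the paper does not prove it; the paper's contribution is only the constructive (upper-bound) half for $t=2,3$, via $t$ pairwise disjoint R\'edei-type linear blocking sets, with the $t=3$ case handled exactly as you describe: three sets on one orbit of $(x:y:z)\mapsto(z:x:y)$, so that disjointness reduces to the single condition $\cL\cap\varphi(\cL)=\emptyset$. On that half your outline matches the paper's route, although you understate the work remaining: the paper must still verify the algebraic disjointness conditions (Theorem \ref{3fold_construction}) and then prove, by separate counting arguments for $h\geq 4$ and $h=3$ (the latter needing an estimate on triples $(\lambda_0,\lambda_1,\lambda_2)$ for which one cubic is irreducible and a companion cubic is reducible), that admissible $\alpha$, $\beta$, $f$ exist.

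Your lower-bound strategy, however, contains a genuine gap and cannot work as stated. First, the base case $t=1$ is itself open: for odd $n\geq 5$ the minimum size of a nontrivial blocking set in $\PG(2,p^n)$ is unknown, and the linearity of small blocking sets over non-prime fields is only conjectural, so the induction has no foundation in the very cases where the conjecture is interesting. Second, the inductive step fails structurally: if $\cB_1\subseteq\cB$ is a blocking set and $\ell$ is a line meeting $\cB$ in exactly $t$ points but meeting $\cB_1$ in two or more points, then $\cB\setminus\cB_1$ meets $\ell$ in at most $t-2$ points and is not a $(t-1)$-fold blocking set; no choice of threshold on $p^{n/s}$ repairs this, because the obstruction is combinatorial, not quantitative. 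This is precisely why known lower bounds for multiple blocking sets (e.g.\ Ball's $tq+\sqrt{tq}+1$) are proved by direct polynomial methods rather than by peeling. Finally, note that the hypothesis ``$p^{n/s}$ large w.r.t.\ $t$'' is forced not by any peeling argument but by the explicit unions of $t+1$ lines in general position (size roughly $tq+q+O(t^4)$) described in the paper, which beat the conjectured bound \eqref{bound} when $t$ is large relative to $p^{n/s}$; your proposal does not engage with these counterexamples.
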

	
	In Sziklai's paper, it is conjectured that small blocking sets are linear (see Section \ref{Sec:2.2} for the definitions), and when multiplicities for the points are also allowed, small $t$-fold blocking sets are the union of some (not necessarily disjoint) linear multiple blocking sets. In the same paper, the bound (\ref{bound}), with $t=1$, is conjectured as a lower bound for the size of certain linear blocking sets. 
	This latter conjecture was proved in \cite{dbvdv} under an extra condition. 
	%As Conjecture \ref{conj0} was shown to hold for some of the cases, it is worth exploring those that so far have been left uncovered. However, for \(t>1\) even the existence of small blocking sets (see Definition \ref{def:small}) remains open in many instances.
	
	We do not deal with the case when \(n=1\), i.e. projective planes over a field of prime order, as in that case blocking sets behave quite differently. In $\PG(2,2)$ all blocking sets are trivial, for $p>2$ prime Blokhuis showed in \cite{Blokhuis0} that the smallest  blocking sets in \(\PG(2,p)\) are of size \(3(p+1)/2\). 
	%Kiss and Sz\H onyi proved in \cite{lowerboundprime} that the size of a \(t\)-fold blocking set is at least \(\max\{tp +(p+3)/2, tp+t\}\). In the same chapter stated a theorem by 
	In \cite{Ball0} for \(p>3\) prime and \(t\leq (p+1)/2\) Ball proved the lower bound \(tp+t+(p+1)/2\) for the size of $t$-fold blocking sets.
	%In the case of double blocking sets the conjecture gives \(2p+4\) for \(\PG(2,p)\), which for \(p>13\) is strictly smaller than the lower bound proved by Ball. 
	Even the construction of $2$-fold blocking sets of size smaller than \(3p\) (the size of the trivial double blocking set: a triangle) has proved to be a difficult task. Braun, Kohnert and Wassermann showed the existence of such sets for \(p=13\) (see \cite{arcs});  Csajb\'ok and H\'eger presented constructions of size \(3p-1\) for \(p=13,19,31,37,43\) (see \cite{3q-1double}).    
	
	%Nevertheless, for \(n>1\) the conjecture still stands and has been proven true for a few cases, thus it seems worth considering.
	We briefly recall what is known in the literature regarding Conjecture \ref{conj0}. 
	Note that $s$ is the smallest prime divisor of $n$.
	
	\begin{enumerate}
		\item For $t=1$, Example \ref{traceex} reaches the bound, cf. \cite[pg.\ 138]{Bsurvey}, or the equivalent construction of T.G. Ostrom from \cite[Theorem 5.3]{Bruen1}.
		\item For $t=1$, $n=3$, Blokhuis proved the conjecture, see \cite[Theorem 6]{Bsurvey}.
		\item For $t=1$, $s=3$, $p\geq 7$, the conjecture follows from \cite{PoSt} by Polverino and Storme, if we put it together with Proposition \ref{PoSt2}.
		\item For $s=2$, the conjecture is true, see \cite{mbs1} by Blokhuis, Storme and Sz\H onyi. 
		\item For $t=2$ there are constructions whose size reaches the order of magnitude in \eqref{bound}, for $s=n$ prime and $p>5$ the exact bound can be reached, see  \cite{dbs2} by De Beule, H\'eger, Sz\H onyi, Van de Voorde; see also \cite{dbs1} by Bacs\'o, H\'eger and Sz\H onyi and \cite{unpub2} by Storme and Polverino.
		\item For $t=3$ and $s=3$, there are constructions for infinitely many $p$  whose size reach the bound \eqref{bound},  \cite{Csajb}. 
	\end{enumerate}
	
	Note that for each set of parameters, the conjecture consists of two parts: proving the lower bound, and presenting constructions whose size attains this bound. We focus on the latter and present constructions for $2$- and $3$-fold blocking sets. In particular, in Section \ref{sec:2} we prove for each $p$ and $n>1$ the existence of two disjoint blocking sets of size $(p^n+p^{n(s-1)/s}+1)$. In Section \ref{sec:3fold} we use the projectivity $(x:y:z) \mapsto (z:x:y)$ to find $3$ disjoint blocking sets of size $(p^{n}+p^{n(s-1)/s}+1)$. 
	%It turns out that We  condition proceed by giving a construction for a $3$-fold blocking set using a collineation and relying on the existence of a certain \((\alpha,\beta, f)\) triple, where $\alpha, \beta \in \F_{p^n}$ and $f \colon \F_{p^n} \rightarrow \F_p$ is a linear functional (see Theorem \ref{3fold_construction}). We continue by proving that the triples mentioned exist for arbitrary \(p\) and \(s\) by thorough case-checking using different methods obtained from algebra, 
	%improving the results listed above as . 
	The main result of this paper improves the results listed above as \((5)\) and \((6)\).  We prove the following theorem:
	
	\begin{Th}
		For each prime $p$ and integer $n>1$, if $s$ denotes the smallest prime divisor of $n$, then there exist $t$-fold blocking sets of the conjectured size $t(p^n+p^{n(s-1)/s}+1)$ in $\FG$ for $t=2,3$.
	\end{Th}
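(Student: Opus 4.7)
The plan is to split into the two cases $t=2$ and $t=3$, each proved by an explicit construction working uniformly for every prime $p$ and every $n>1$. The building block in both cases is a linear Rédei-type blocking set of size $p^n+p^{n(s-1)/s}+1$, namely the one of Example \ref{traceex}: the graph of the trace $\tr:\F_{p^n}\to\F_{p^{n/s}}$ (embedded in the affine plane via $x\mapsto(x,\tr(x))$), together with the $p^{n(s-1)/s}+1$ directions it determines on the Rédei line at infinity.

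For $t=2$, I would construct two disjoint copies $B_0,B_1$ of a Rédei-type blocking set of this form. A natural approach is to take $B_0$ corresponding to $\tr$ and $B_1$ corresponding to a twisted map $x\mapsto\tr(\lambda x)$ for a suitable scalar $\lambda\in\F_{p^n}^\ast$, or alternatively to apply an additive translate to $B_0$. Disjointness of the affine parts then reduces to the non-solvability of a linear equation over $\F_{p^n}$, which can be arranged by choosing $\lambda$ outside of $\F_{p^{n/s}}$. Disjointness on the line at infinity is handled separately by verifying that the two sets of directions do not overlap; if necessary, this can be forced by placing $B_1$ on a different Rédei line.

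For $t=3$ the novel idea is to exploit the projectivity $\varphi:(x:y:z)\mapsto(z:x:y)$, which has order $3$ and cyclically permutes the sides of the coordinate triangle. The plan is to exhibit a single Rédei-type blocking set $B$ for which $B$, $\varphi(B)$ and $\varphi^2(B)$ are pairwise disjoint; their union is then a $3$-fold blocking set of size $3(p^n+p^{n(s-1)/s}+1)$ lying on a single $\varphi$-orbit, as promised by the abstract. Since $\varphi^2=\varphi^{-1}$, pairwise disjointness reduces to verifying both $B\cap\varphi(B)=\emptyset$ and $B\cap\varphi^2(B)=\emptyset$, each of which boils down to an additive equation over $\F_{p^n}$, involving the linearized polynomial defining the affine part of $B$, having no nonzero solutions, together with the auxiliary check that each of the three Rédei lines avoids the affine parts of the other two blocking sets. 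The main obstacle is the uniform choice of this linearized polynomial so that both nonsolvability conditions hold simultaneously for every $p$ and every $n>1$; since the previous partial results listed as items (5) and (6) in the introduction correspond to cases where such a choice was found only for restricted parameters, I expect the heart of the proof to be a careful algebraic/Galois-theoretic verification, handling separately any small exceptional values of $p$ or $n$ that arise from the triangle vertices fixed by short $\varphi$-orbits.
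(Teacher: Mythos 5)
Your high-level architecture matches the paper's: both $2$- and $3$-fold blocking sets are built as disjoint unions of R\'edei-type linear blocking sets equivalent to the trace construction, and for $t=3$ the paper uses exactly the order-$3$ projectivity $\varphi:(x:y:z)\mapsto(z:x:y)$ and a single set $\cL$ with $\cL,\varphi(\cL),\varphi^2(\cL)$ pairwise disjoint. (One small simplification you miss: since $\varphi$ has order $3$, the single condition $\cL\cap\varphi(\cL)=\emptyset$ already implies the other two intersections are empty, so only one disjointness check is needed.) For $t=2$, however, your first two concrete suggestions fail: if $B_1$ is the graph of $x\mapsto\Tr(\lambda x)$, then $B_0$ and $B_1$ share the ideal point $(1:0:0)$ (both kernels are nonzero), and an additive translate of $B_0$ has exactly the same set of determined directions, so the ideal parts coincide entirely. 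Only your fallback --- putting the second blocking set on a different R\'edei line --- is viable, and it is what the paper does, taking $\cL'=\{(y':x':g(x')+y'\alpha)\}$ with R\'edei line $X=0$.

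The genuine gap is that the existence of suitable parameters is the entire mathematical content of the theorem, and you defer it rather than supply it. The paper first translates disjointness into explicit algebraic conditions on $(f,\alpha,\beta)$ (Theorems \ref{2fold_construction} and \ref{3fold_construction}), and then must prove these conditions are simultaneously satisfiable for \emph{every} prime power $q$ and every $h\geq 2$. For $t=2$ this is a short counting argument over $\alpha$ using Lemma \ref{f_counting_lemma}. For $t=3$ it is much harder: the case $h\geq 4$ needs $f$ defined on a power basis of $\alpha$ with two free coefficients and a count of failing pairs via the expansion of $1/(\alpha+k)$ (Lemma \ref{frac_calc}); the case $h=3$ needs a separate argument counting irreducible cubics $p(x)$ whose companion cubic $r(x)$ is reducible (Lemma \ref{lambda}), which only works for $q>10$, with $q\le 9$ settled by computer; and $h=2$ is handled instead by partitions into Baer subplanes. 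Saying you ``expect the heart of the proof to be a careful algebraic/Galois-theoretic verification'' names the hard part without doing it, so as written the proposal establishes only the reduction, not the theorem.
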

	
	To prove this, in Section \ref{sec:3fold} we will prove a slightly stronger result:
	\begin{Th}%\ref{3fold_qh}
		\label{3fold_qh}
		Let $q$ be a prime power and $h\geq2$ be an integer. In $\PG(2,q^h)$ there exist $t$-fold blocking sets of size $t(q^h+q^{h-1}+1)$ for $t=2,3$.
	\end{Th}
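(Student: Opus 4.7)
The plan is to reduce the theorem to producing a single linear blocking set $B_0$ of R\'edei type of size $q^h+q^{h-1}+1$ in $\PG(2,q^h)$ whose three $\sigma$-translates are pairwise disjoint, where $\sigma\colon(x:y:z)\mapsto(z:x:y)$ is the projectivity of order~$3$. Given such a $B_0$, the set $B_0\cup\sigma(B_0)\cup\sigma^2(B_0)$ is a $3$-fold blocking set of cardinality $3(q^h+q^{h-1}+1)$ (each $\sigma^i(B_0)$ is itself a blocking set because $\sigma$ is a collineation and $|\sigma^i(B_0)|=|B_0|$), and $B_0\cup\sigma(B_0)$ is a $2$-fold blocking set of the corresponding smaller size.

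For the construction of $B_0$ I would place the R\'edei line at $z=0$ and let the affine part be $A=\{(x:f(x):1):x\in\F_{q^h}\}$ for some $\F_q$-linear map $f\colon\F_{q^h}\to\F_{q^h}$ whose image is a one-dimensional $\F_q$-subspace. The blocking set is $B_0=A\cup D$, where $D=\{(1:m:0):m\in T\}$ and $T=\{f(y)/y:y\in\F_{q^h}^*\}$; the assumption on the image of~$f$ forces $|T|=q^{h-1}+1$, hence $|B_0|=q^h+q^{h-1}+1$. The simplest choice $f=\tr_{q^h/q}$ is inadequate, because $(1:0:0)$ then lies in both $B_0$ (since $0\in T$) and $\sigma(B_0)$ (as the image of $x=0$ in $\sigma(A)=\{(1:x:f(x))\}$). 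I would therefore work with a parametric family, such as $f(x)=\tr(\beta x)+\gamma x$ or $f(x)=\alpha\tr(\beta x)$ with $\alpha,\beta,\gamma\in\F_{q^h}$, and select the parameters so that the three $\sigma$-translates are disjoint.

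Once $f$ is fixed, it suffices to verify $B_0\cap\sigma(B_0)=\emptyset$: the pair $B_0\cap\sigma^2(B_0)$ equals $\sigma^{-1}(B_0\cap\sigma(B_0))$, and $\sigma(B_0)\cap\sigma^2(B_0)=\sigma(B_0\cap\sigma(B_0))$. Splitting $B_0\cap\sigma(B_0)$ by affine/R\'edei parts on each side gives four subcases. The case $D\cap\sigma(D)$ is immediately empty by comparing first coordinates ($1$ versus $0$). The case $A\cap\sigma(D)$ forces the first coordinate to vanish, hence $x=0$ and $f(x)=0$, but then the second coordinate is $0$ rather than $1$. The case $D\cap\sigma(A)$ reduces to $(1:m:0)=(1:x:f(x))$ with $m\in T$, giving $x=m$ and $f(m)=0$, which the parameter choice arranges to force $m=0$; imposing $0\notin T$ closes this case. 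The remaining case $A\cap\sigma(A)$, arising from $(x:f(x):1)=(1:y:f(y))$, yields after normalisation the functional equation
\[
f(y)\cdot f\!\left(\tfrac{1}{f(y)}\right)=y,\qquad y\in\F_{q^h}^*,\ f(y)\neq 0.
\]

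The main obstacle is to verify, for a suitable parameter choice, that this last equation admits no solution. With $f(x)=\tr(\beta x)+\gamma x$ it becomes an explicit trace identity in~$y$ whose non-solvability one expects to establish by a short direct argument, typically by showing that any hypothetical solution would force $y$ into a small subfield where a further contradiction is easy to exhibit. In parallel one must ensure that $B_0$ avoids every fixed point of~$\sigma$, that is, every point $(1:\omega:\omega^2)$ with $\omega^3=1$ in $\F_{q^h}$, which translates into the additional conditions $f(\omega)\neq\omega^2$; this contributes at most three further constraints on the parameters. A few separate parameter subfamilies may be needed depending on $\gcd(3,q^h-1)$ and on divisibility conditions such as $p\mid h$ or $p\mid h+1$, but in each case the open conditions to be satisfied cut out a Zariski-dense subset of the parameter space, so an explicit choice can always be made. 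Establishing disjointness in this way completes the proof of both assertions.
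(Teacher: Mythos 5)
Your overall strategy is exactly the one the paper follows: take a single R\'edei-type linear blocking set, act on it by the order-$3$ projectivity $(x:y:z)\mapsto(z:x:y)$, and observe that it suffices to check $B_0\cap\sigma(B_0)=\emptyset$ (the other two intersections being $\sigma$-images of this one). However, the normal form you choose for $B_0$ cannot work, for a structural reason independent of the choice of $f$. Since $f$ is $\F_q$-linear with image of $\F_q$-dimension $1$, its kernel has dimension $h-1\geq 1$, so $0=f(y)/y$ for any nonzero $y\in\ker f$; hence $0\in T$ and $(1:0:0)\in D\subseteq B_0$. (You cannot simply drop this point: $0$ is a determined direction of the graph, so some affine line with slope $0$ misses the graph entirely and must be blocked by $(1:0:0)$; and your fallback families $\Tr(\beta x)+\gamma x$ with $\gamma\neq 0$ either fail to have one-dimensional image or determine more than $q^{h-1}+1$ directions, ruining the size count.) On the other hand, $(0:0:1)=(0:f(0):1)$ always lies in $A$, so $(1:0:0)=\sigma((0:0:1))$ always lies in $\sigma(A)$. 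Thus $(1:0:0)\in B_0\cap\sigma(B_0)$ for every admissible $f$, and your case $D\cap\sigma(A)$ can never be closed. The paper circumvents precisely this obstruction by translating the affine graph off the origin: it takes $\cL=\{(x:f(x)+y\alpha:y\beta)\}$ with $f\colon\Fqh\to\F_q$ linear, $\alpha\in\Fqh\setminus\F_q$ and $\beta\neq 0$, so that the graph point over $x=0$ becomes $(0:\alpha:\beta)$ and the zero direction is no longer hit by $\sigma$ of an affine point. This affine shift is the missing idea in your proposal.

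Even granting a corrected normal form, the remaining step you defer --- showing that the resulting conditions on the parameters are simultaneously satisfiable --- is where most of the paper's work lies, and the Zariski-density heuristic you invoke gives nothing over a finite field. The paper turns disjointness into explicit conditions (Theorem \ref{3fold_construction}), and then needs two genuinely different existence arguments: for $h\geq 4$ it expands $1/(\alpha+k)$ in the basis $1,\alpha,\dots,\alpha^{h-1}$ and counts bad pairs $(s,t)$ of parameter values, while for $h=3$ there are too few degrees of freedom and it must instead count minimal polynomials $p$ that are irreducible while an associated cubic $r$ is reducible (Lemma \ref{lambda}), with a computer check for $q\leq 9$ and a separate Baer-subplane argument for $h=2$. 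Your proposal would need analogous quantitative arguments, and in particular a separate treatment of small $h$, before it could be considered a proof.
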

	
	\section{Preliminaries}
	
	%To provide a better understanding of what follows, we begin by giving a brief introduction to finite geometry. This section intends to give all the information necessary for a base foundation of the subject at hand. 
	
	In this section, we list the main definitions,  results and constructions for blocking and multiple blocking sets. We present a construction which explains why we assume in Conjecture \ref{conj0} that the size $p^{n/s}$ of the largest subfield of $\F_{p^n}$ is large enough w.r.t. $t$. We will show how the exponent of the smallest nontrivial blocking set is related to the largest subfield of $\F_{p^n}$, cf. Proposition \ref{PoSt2}. Finally, in Proposition \ref{minbs} we prove that the small multiple blocking sets that we are trying to construct are necessarily minimal.

	\subsection{Blocking sets and multiple blocking sets containing lines} 
	
	%We begin by discussing the objects we will be working with.
	%\begin{Def}
	%   A point set $\cB$ of a finite projective plane $\Pi$ is a $t$-fold blocking set, if it meets each line of $\Pi$ in at least $t$ points. The $1$-fold blocking sets are simply called blocking sets. Blocking sets of size $q+N\leq2q$ in $\PG(2,q)$ with an $N$-secant\footnote{An \(N\)-secant of $\cS$ is a line meeting $\cS$ in exactly $N$ points.} are called \textit{R\'edei type blocking sets}. Lines meeting such a blocking set in $N$ points are called \textit{R\'edei lines}. 
	%\end{Def}
	
	In \cite[Theorem 4.1]{Ball0}, Ball proved that if $\cB$ is a $t$-fold blocking set that contains no line, then it has at least $tq + \sqrt{tq} +1$ points. %Two blocking sets cannot be disjoint if one of them contains  a line.
	%When $t>1$, then the condition that $\cB$ is not containing lines is in general not a relevant one. for large enough $q$. Indeed, 
	If $\ell$ is a line contained in a $t$-fold blocking set $\cB$, then $\cB \setminus \ell$ is an affine $(t-1)$-fold blocking set and hence it has size at least $tq-t+1$ (cf. Bruen \cite[Theorem 2.1]{Bruen0}), thus $|\cB|\geq (t+1)q-t+2$. For a fixed $t$, this lower bound is smaller than the conjectured \eqref{bound}  only for small $p^{n/s}$. 
	For $t=2$ this happens only for $q=2^n$, $n$ prime; and for \(t=3\) when $q=2^n$, or $q=3^n$, with \(n\) prime. We show that in these two cases it is in fact possible to construct blocking sets whose size is smaller than \eqref{bound}. These blocking sets, however, contain some lines.
	
	%\begin{Rem}
	%   When \(q=2^n,~n\) is a prime, the lower bound given by Bruen is smaller then our conjectured size. In fact, we can find 2- and 3-fold blocking sets such that they contain a line and their size is strictly smaller than \eqref{bound}. Moreover, these constructions are minimal as well (see Definition \ref{minimal}). 
	
	\begin{Ex}
		The union of three lines in general position in $\PG(2,q)$ gives a double blocking set of size $3q$. If $q=2^n$, $n$ is a prime, then this number is smaller than \eqref{bound}, which reads as $2(2^n+2^{n-1}+1)$. 
		
		Similarly, if $q$ is even and we take the union of four lines in general position, say \(\{X=0\}\cup\{Y=0\}\cup\{Z=0\}\cup\{X+Y+Z=0\}\), then this point set meets each line of the plane in at least three points, except for the lines \(\{X=Y\},\{Y=Z\}\) and \(\{Z=X\}\), which are blocked twice and are concurrent at the point \((1:1:1)\) (we are using homogeneous coordinates here). And so, \(\{X=0\}\cup\{Y=0\}\cup\{Z=0\}\cup\{X+Y+Z=0\}\cup\{(1:1:1)\}\) is a 3-fold blocking set of size \(4q-1\), which is smaller than \eqref{bound} when $t=3$,  $q=2^n$ and \(n\) is a prime.

		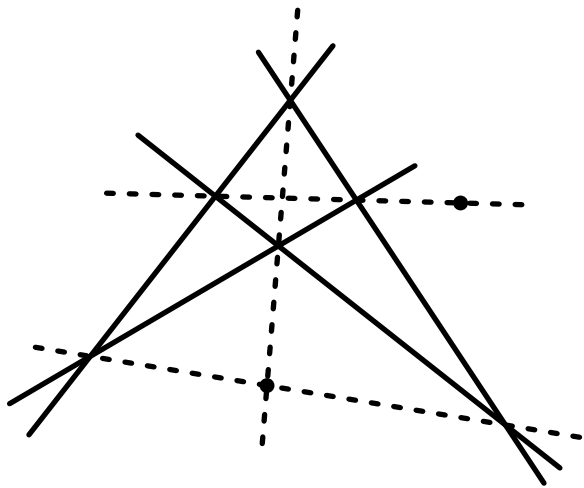
\begin{figure}[h]
			
\begin{center}
	\begin{tikzpicture}[
		line cap=round,
		line join=round,
		>=triangle 45,
		x=1cm,y=1cm,
		scale=0.8,
		transform shape
		]
		
		% ---- styles ----
		\tikzset{
			solid line/.style={line width=2pt},
			dashed line/.style={line width=2pt, dash pattern=on 2.5pt off 6pt},
			my point/.style={circle, fill=black, inner sep=0pt, minimum size=7pt}
		}
		
		% ---- solid lines ----
		\draw[solid line] (0.29671444958728077,3.9246867905760423)--(4.993792445111142,-3.217777171689622);
		\draw[solid line] (1.5227568270481144,4.030923276983095)--(-3.4778257477243173,-2.417196358907672);
		\draw[solid line] (-1.6837299644722272,2.5501467223259406)--(5.255768729726721,-2.9659664249919087);
		\draw[solid line] (2.8704922850844965,2.04226304188097)--(-3.795682586333578,-1.9011080088170464);
		
		% ---- dashed lines ----
		\draw[dashed line] (0.9426769062316673,4.619782759296475)--(0.3503365552228738,-2.679215776213218);
		\draw[dashed line] (-2.199777455525673,1.5879772401074168)--(4.725433907187155,1.394781660988817);
		\draw[dashed line] (-3.370997554335467,-0.9660193250743894)--(5.658312642352516,-2.4701293623698484);
		
		% ---- points on dashed lines only ----
		\node[my point] at (3.6218292173433153,1.4255693923670059) {};
		\node[my point] at (0.4378778707767703,-1.600504976018218) {};
		
	\end{tikzpicture}
\end{center}

			\caption{$3$-fold blocking set of size $4q$}
				\label{Fig1}
			\end{figure}

		Moreover, if we take four lines in general position in \(\PG(2,q)\) with the addition of two suitably chosen points as seen above in Figure \ref{Fig1}, we get a \(3\)-fold blocking set of size \(4q\). When $q=3^n$, $n$ is a prime, then \eqref{conj0} reads as \(4q+3>4q\).
		
	\end{Ex}
	
	This construction can be generalized for arbitrary $t$ to give $t$-fold blocking sets that are smaller than \eqref{bound} when $t$ is large enough w.r.t. $p^{n/s}$. 
	The construction works in an arbitrary projective plane $\Pi_q$ of order $q$. 
	First, take $t+1$ distinct lines $\ell_1, \ell_2, \dots, \ell_{t+1}$ in $\Pi_q$ such that no three of them are incident with the same point.
	These lines will then have $\binom{t+1}{2}$ mutual intersections. Denote the point set of these intersections by $\cM$. The union $\cB:=\ell_1 \cup \ell_2 \cup \dots \cup \ell_{t+1}$ will contain $(t+1)(q+1) - \binom{t+1}{2}$ points.
	This point set meets all lines of the plane in at least $t$ points, except for those incident with multiple of the $\binom{t+1}{2}$ points of $\cM$. Such lines are either the $\ell_i$ themselves, or lines meeting $\cB$ in fewer than $t$ points. We will call the latter lines exceptional.  
	For $i=0,1,2,\ldots,t-1$ let $n_i$ denote the number of lines meeting $\cM$ in exactly $i$ points. 
	%Then, by double counting the pairs $(P,\ell)$, where $P\in \cM \cap \ell$, and $\ell$ is a line meeting $\cM$ in less than $t$ points:
	%\begin{equation}
	%\label{dcount}
	%|\cM|(q-1)=\sum_{i=1}^{t-1} i n_i.
	%\end{equation}
	By double counting the triples $(P,Q,\ell)$, where $P,Q\in \cM$ are distinct points of the exceptional line $\ell$, we obtain:
	%\begin{equation}
	%\label{dcount2}
	\[|\cM|(|\cM|-2t+1)=\sum_{i=2}^{t-1}i(i-1)n_i.\]
	%\end{equation}
	%Note that
	%\[\sum_{i=1}^{t-1}n_i=q^2+q+1-(t+1)-n_0.\]
	
	For an exceptional line $\ell$ meeting $\cM$ in $i$ points, we want to add $(i-1)$ extra points of $\ell$ to $\cB$ in order to have $t$ points on that line as well. 
	Hence we must add
	\[\sum_{i=2}^{t-1}(i-1)n_i=
	\sum_{i=3}^{t-1}(i-1)n_i+\frac{1}{2}\left(|\cM|(|\cM|-2t+1)-\sum_{i=3}^{t-1}i(i-1)n_i\right)=\]
	\[\frac12 |\cM|(|\cM|-2t+1)-\sum_{i=3}^{t-1}(i^2+1)n_i\]
	points to $\cB$. In the worst case 
	$n_3=n_4=\ldots=n_{t-1}=0$. 
	This results in $\binom{t+1}{2}\left(\binom{t+1}{2} - 2t + 1\right)/2$ exceptional lines, which are all only incident with $2$ points of $\cM$ and hence $t-1$ points of $\cB$. 
	Each of these lines can then be blocked with the addition of a single point, resulting in a $t$-fold blocking set of size 
	\begin{equation}
		\label{new}
		tq + q + 1 + \left(t^4 - 2t^3 - 5t^2 + 6t\right)/8.
	\end{equation}
	
	If two exceptional lines meet each other outside of $\cB$, then they can be blocked by a single point instead of using two points. This means that a more careful analysis could slightly improve the construction above. 
	
	Note that extending with points the union of some lines in general position is a standard technique to construct small multiple blocking sets. For some recent constructions of this type see for example \cite{Daskalov}.
	
	%For a choice of $t$, this will be smaller than the lower bound in \eqref{bound}, provided $p^{n/s}$ is large enough.

	%{\color{blue} One might think that this lower bound can be improved if we don't require the blocking set to contain $t+1$ lines, but just a few. Assume that in $\PG(2,p^n)$, the $t$-fold blocking set $\cB$ includes a line $\ell$. By removing this line from the plane, we get a $(t-1)$-fold blocking set in the affine plane. i.e. $\cB\setminus\ell$ is a $(t-1)$-fold blocking set in the affine plane $\AG(2,p^n)=\PG(2,p^n)\setminus\ell$.
		%A theorem from Bruen \cite{Bruen0} shows that a $t$-fold blocking set in the affine plane $\AG(2,p^n)$ has at least $(t+1)(p^n-1)+1$ points. This means that $\cB$ has at least $(p^n+1)+t(p^n-1)+1=(t+1)p^n-t+2$ points, which is smaller than the conjectured lower bound in \eqref{bound} for fixed $t$ only for small $p^\frac{n}{s}$.}
	
	%\end{Rem}
	
	%In particular, we are most interested in small blocking sets.

	%By finding disjoint small blocking sets, we aim to construct small \(t\)-fold blocking sets whose size reach the conjectured size.
	
	\subsection{Linear blocking sets}
	\label{Sec:2.2}
	
	\begin{Def} \label{def:small}
		A blocking set $\cB$ in \(\PG(2,q)\) is called small if $|\cB|< 3(q+1)/2$. 
		A \(k\)-secant of $\cB$ is a line that meets $\cB$ in exactly $k$ points.
		Blocking sets of size $q+N\leq2q$ in $\PG(2,q)$ with an $N$-secant are called \textit{R\'edei type blocking sets}. Lines that meet such a blocking set in $N$ points are called \textit{R\'edei lines}. 
		A \(t\)-fold blocking set of \(\PG(2,q)\) is called small when it has less than \(tq +(q +3)/2\) points.
	\end{Def}
	
	In our constructions, we will use disjoint copies of small linear blocking sets of R\'edei type. Linear blocking sets are of interest to us since, over prime fields, it has been proved, while over fields of composite order it is conjectured that all small blocking sets are linear. %The question remains open for the general case. %All the constructions we present are of this form.
	
	The theory of linear blocking sets is based on the fact that the lattice of $\F_{q^s}$-subspaces of $V=\F_{q^s}^3$ is isomorphic to $\Pi=\PG(2,q^s)$. Lines of $\Pi$ are set of points defined by nonzero vectors of two-dimensional $\F_{q^s}$-subspaces of $V$. 
	Let $U$ be any $\F_q$-subspace of $V$ of dimension $(s+1)$, denote by $U^*$ the set of nonzero vectors of $U$, and let $T$ be any two-dimensional $\F_{q^s}$-space. By Grassmann's identity, $U\cap T$ is nontrivial, and hence
	\[\cL_U:=\{ \la\vec v \ra_{\F_{q^s}} : \vec  v\in U^* \}\]
	is a blocking set, which is called an $\F_q$\textit{-linear blocking set} (of rank $(s+1)$). 
	This argument is due to Lunardon \cite{Lunardon}.
	
	\begin{Def}
		The weight of the point $P=\la\vec v \ra_{\F_{q^s}}$ w.r.t. $\cL_U$ is defined to be \(w(P):=\dim_{\Fq}(U\cap \la\vec v \ra_{\F_{q^s}})\). Similarly, if $\ell$ is a line whose points are defined by the nonzero vectors of the $2$-dimensional $\F_{q^s}$-subspace $S$, then the weight of $\ell$ w.r.t. $\cL_U$ is $w(\ell):=\dim_{\F_q}(U \cap S)$. 
	\end{Def}
	
	The following example of a linear blocking set will be crucial for us.
	
	\begin{Ex}[The trace construction]
		\label{traceex}
		If $s$ is the smallest prime divisor of $n>1$, then $\F_{p^{n/s}}$ is the largest subfield of $\F_{p^n}$. Put $q:=p^{n/s}$. 
		In $\PG(2,p^n)=\PG(2,q^s)$ consider the  mapping $\Tr_{q^s/q}: \F_{q^s} \rightarrow \F_{q}$ defined by $\Tr_{q^s/q}(x):=x+x^q+\ldots+x^{q^{s-1}}$. Note that this is an $\F_{q}$-linear functional. 
		
		We may take the following $(s+1)$-dimensional $\F_q$-subspace of $V=\F_{q^s}^3$: 
		\begin{equation}
			\label{B}
			U:=\{(x,\Tr_{q^s/q}(x),y) : x \in \F_{q^s}, y \in \F_q\}.
		\end{equation}
		It is well-known that $\cL_U=\{(x:\Tr_{q^s/q}(x):y) : x \in \F_{q^s}, y \in \F_q, (x,y)\neq (0,0)\}$ is a blocking set of size $q^s+q^{s-1}+1$ in $\PG(2,q^s)$ (cf. \cite[pg.\ 138]{Bsurvey}), thus when \(s\) is the smallest prime divisor of \(n\), its size reaches the lower bound in Conjecture \ref{conj0}. 
		
		The point $(1:0:0)$ is defined by the vectors in $\{(x,0,0) : \Tr_{q^s/q}(x)=0\}$ and hence it has 
		weight $(s-1)$. 
		Also, $\cL_U$ is a blocking set of R\'edei type, $\ell_{\infty}$  (the line of equation $Z=0$) is one of its R\'edei lines. This means that $\cL_U$ consists of the graph of the trace function in the affine plane (points of form $(x:\Tr_{q^s/q}(x):1)$), and the directions determined by this graph (points of form $(x:\Tr_{q^s/q}(x):0)$).
	\end{Ex}
	
	From now on, we will refer to this construction as the trace construction.
	Note that the observations made for Example \ref{traceex} are true if we replace \(\Tr_{q^s/q}\) with any $\F_q$-linear function whose kernel has codimension $1$. These are exactly the maps $x \mapsto \alpha \Tr(\beta x)$, where $\alpha,\beta \in \F_{q^s}^*$. In our constructions, we will use $\F_{q^s}\rightarrow\Fq$ linear functionals. The following proposition is crucial to our cause.
	
	\begin{Prop}
		\label{prop:trace}
		Let $W$ be any $(s+1)$-dimensional $\F_q$-subspace of $V=\F_{q^s}^3$ such that $\cL_W$ is a nontrivial (i.e., it is not a line) blocking set of $\PG(2,q^s)$ with a point $P$ of weight $(s-1)$. Then $\cL_W$ is projectively equivalent to $\cL_U$ defined in Example \ref{traceex}.
	\end{Prop}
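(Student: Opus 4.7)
The plan is to apply a sequence of projectivities in $\PGL(3,q^s)$ to normalize $W$ until it matches $U$. First, after moving $P$ to $(1:0:0)$, the intersection $W \cap \la(1,0,0)\ra_{\F_{q^s}}$ is an $(s-1)$-dimensional $\F_q$-subspace of the $s$-dimensional $\F_q$-space $\la(1,0,0)\ra_{\F_{q^s}}$, so it has the form $\ker(\lambda) \times \{(0,0)\}$ for some nonzero $\F_q$-linear functional $\lambda \colon \F_{q^s} \to \F_q$. Every such $\lambda$ equals $x \mapsto \Tr(\gamma x)$ for a unique $\gamma \in \F_{q^s}^*$, so the $\F_{q^s}$-linear projectivity $(x,y,z) \mapsto (\gamma x, y, z)$ lets us assume that $W_0 := W \cap \la(1,0,0)\ra_{\F_{q^s}} = \ker\Tr \times \{(0,0)\}$.

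Consider then the projection $\pi \colon (x,y,z) \mapsto (y,z)$ restricted to $W$. Its kernel is $W_0$, so $\bar W := \pi(W)$ is a $2$-dimensional $\F_q$-subspace of $\F_{q^s}^2$. I claim that the nontriviality of $\cL_W$ is equivalent to $\bar W$ not being contained in any $1$-dimensional $\F_{q^s}$-subspace of $\F_{q^s}^2$, i.e., to any $\F_q$-basis of $\bar W$ being $\F_{q^s}$-linearly independent. Indeed, for a line $\ell$ with corresponding $\F_{q^s}$-subspace $S$ not through $P$ one has $\dim_{\F_q}(W \cap S) \leq \dim_{\F_q} \bar W = 2 < s+1$, so $|\cL_W \cap \ell| \leq (q^2-1)/(q-1) < q^s + 1$ and $\ell \not\subseteq \cL_W$; for a line through $P$ in direction $(y_0:z_0)$, a short calculation yields $\dim_{\F_q}(W \cap S) = (s-1) + \dim_{\F_q}(\bar W \cap \la(y_0,z_0)\ra_{\F_{q^s}})$, from which one checks that $\ell \subseteq \cL_W$ if and only if $\bar W \subseteq \la(y_0,z_0)\ra_{\F_{q^s}}$. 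Under the hypothesis, the $\F_q$-basis of $\bar W$ is therefore an $\F_{q^s}$-basis of $\F_{q^s}^2$, and the $\F_{q^s}$-linear map in the $(y,z)$-plane sending it to the standard basis reduces us to $\bar W = \F_q^2$.

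Now $W$ is spanned over $\F_q$ by $W_0$ together with two vectors of the form $(x_1,1,0)$ and $(x_2,0,1)$. The shear $(x,y,z) \mapsto (x - x_1 y - x_2 z, y, z)$ fixes $W_0$, sends these two generators to $(0,1,0)$ and $(0,0,1)$, and hence transforms $W$ into $W' := \ker\Tr \times \F_q \times \F_q$. Finally, choosing $\delta \in \F_{q^s}$ with $\Tr(\delta)=1$ (possible by surjectivity of $\Tr$), the map $(x,y,z) \mapsto (x + \delta y, y, z)$ sends $W'$ bijectively onto $U$: for $(x,y,z) \in W'$ one has $\Tr(x+\delta y) = \Tr(x) + y\Tr(\delta) = y$, so the image $(x+\delta y, \Tr(x+\delta y), z)$ lies in $U$, and equality follows by a dimension count.

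I expect the main obstacle to be the middle step, where the geometric hypothesis that $\cL_W$ is nontrivial must be translated into the algebraic condition that $\bar W$ spans $\F_{q^s}^2$ over $\F_{q^s}$; in particular, the verification that $\bar W \subseteq \la(y_0,z_0)\ra_{\F_{q^s}}$ forces every point of the corresponding line through $P$ to lie in $\cL_W$ requires a careful linear-algebra computation using the explicit structure of $W$. The other normalizations are direct, explicit projectivities.
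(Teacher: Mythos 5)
Your argument is correct, and it reaches the normal form by a genuinely different route from the paper's. The paper first extracts incidence information: it shows that every point of $\cL_W$ other than $P$ has weight $1$, and that every line through $P$ is either a tangent or a $(q^{s-1}+1)$-secant; it then chooses coordinates so that a long secant becomes the line $Z=0$ with $(0:1:0)\notin\cL_W$, which exhibits $\cL_W$ as a R\'edei-type set $\{(x:f(x):y)\}$ for an $\F_q$-linear $f\colon\F_{q^s}\to\F_{q^s}$, and finally uses the $(q^{s-1}+1)$-secant $\la O,P\ra$ to force $\dim_{\F_q}\ker f=s-1$, whence $f=\alpha\Tr(\beta\,\cdot)$. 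You instead work directly with the filtration $W_0=W\cap\la(1,0,0)\ra_{\F_{q^s}}\subset W$ and the two-dimensional quotient $\bar W$, translate nontriviality into $\bar W$ spanning $\F_{q^s}^2$ over $\F_{q^s}$, and then normalize with explicit shears. The two computations you flag do go through: for a line $\ell$ through $P$ with subspace $S$ one has $W\cap S=\pi|_W^{-1}\bigl(\bar W\cap\la(y_0,z_0)\ra_{\F_{q^s}}\bigr)$, giving the dimension formula, and if $\ell\subseteq\cL_W$ then counting nonzero vectors of $W\cap S$ over the $q^s+1$ points forces $\dim_{\F_q}(W\cap S)=s+1$, i.e.\ $W\subseteq S$ and $\bar W\subseteq\la(y_0,z_0)\ra_{\F_{q^s}}$ (and only this direction is actually needed, via its contrapositive). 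Your version avoids all secant-size counting; what the paper's route buys in exchange is that it establishes along the way the secant spectrum of the trace construction that is reused later in Lemma~\ref{tracelemma}.
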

	\begin{proof}
		First, note that lines have weight at most $s$ w.r.t. $\cL_W$. Indeed, if there were a line $\ell$ of weight at least $s+1$, then, by Grassmann's identity, each point of $\ell$ is a point of $\cL_W$, and hence $\cL_W$ is trivial. 
		If $Q\neq P$ was another point of $\cL_W$
		of weight larger than $1$, then the line $\langle Q,P\rangle$ would have weight $s+1$, a contradiction. 
		Let $\ell$ denote a line through $P$ and put $S$ for the $2$-dimensional $\F_{q^s}$-subspace of $V$ whose nonzero vectors define the points of $\ell$. Then either $\cL_W \cap \ell=\{P\}$, or  $\dim_{\F_q}(S \cap W)=s$. In the latter case, put $w_1$ to denote the number of points of weight $1$ in $\ell \cap \cL_W$. Then counting nonzero vectors of $S\cap W$ gives 
		$q^{s}-1=|(S\cap W)^*|=w_1(q-1)+(q^{s-1}-1)$ and hence $w_1=q^{s-1}$, thus $|\ell \cap \cL_W|=w_1+1=q^{s-1}+1$. 
		
		After a suitable projectivity we may assume $P=(1:0:0)$, $O=(0:0:1)\in \cL_W$, $(\infty)=(0:1:0)\notin \cL_W$ and that $\ell_{\infty}$ is a line meeting $\cL_W$
		in $q^{s-1}+1$ points.
		Then $W$ is generated by $(0,0,1)$ and by $\{(x,f(x),0) : x\in \F_{q^s}\}$ for some $\F_{q}$-linear function $f \colon \F_{q^s} \rightarrow \F_{q^s}$, i.e.  
		\[\cL_W=\{ (x : f(x) : y) : x\in \F_{q^s}, y \in \F_q, (x,y)\neq (0,0)\}.\]
		The line $\la O, P\ra$ is a $(q^{s-1}+1)$-secant of $\cL_W$ and hence the kernel of $f$ has dimension $(s-1)$ over $\F_q$. It follows that $f(x)=\alpha \Tr(\beta x)$ for some $\alpha,\beta \in \F_{q^s}^*$. The projectivity $(x:y:z) \mapsto (\beta x: y/\alpha:z)$ maps $\cL_W$ to $\cL_U$. 
	\end{proof}
	
	So, any $(s+1)$-dimensional $\F_q$-subspace of $V=\F_{q^s}^3$ that satisfies the conditions of the proposition defines a linear blocking set of desired size. Our goal is to find such sets disjoint from each other. In Proposition \ref{minbs}, we show that the disjoint union of three copies of the trace construction is necessarily a minimal $3$-fold blocking set.
	
	\color{blue}
	\color{black}
	\begin{Def}\label{minimal}
		A \(t\)-fold blocking set \(\cB\) is called minimal if it does not contain a smaller \(t\)-fold blocking set, or equivalently, if each of its points is contained in a \(t\)-secant of $\cB$.  
		
		If $\cB$ is a small minimal nontrivial blocking set, then Sz\H onyi proved that there is an integer $1 \le e < n$ such that each line meets $\cB$ in $1 \pmod{p^e}$ points, and there is a line meeting $\cB$ not in $1 \pmod{p^{\,e+1}}$ points, see \cite{1modp}. 
		The integer $e$ is called the \emph{exponent} of $\cB$, and Sziklai proved in \cite{linconj} that $e \mid n$.
	\end{Def}
	
	\begin{Prop}
		\label{PoSt2}
		Let $\cB$ be the smallest minimal nontrivial blocking set in $\PG(2,p^n)$, $n>1$, and let $s$ denote 
		the smallest prime divisor of $n$. Then the exponent $e$ of $\cB$ is $n/s$.
	\end{Prop}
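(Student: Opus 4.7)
The plan is to sandwich the exponent $e$ between $n/s$ from above and $n/s$ from below. The upper bound $e \le n/s$ is an immediate arithmetic consequence of what has already been recalled: Sziklai's theorem gives $e \mid n$, and the definition of the exponent forces $1 \le e < n$. Hence $k := n/e$ is a divisor of $n$ strictly greater than $1$, so $k \ge s$, and therefore $e = n/k \le n/s$.

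The reverse inequality $e \ge n/s$ is the substantive content and will be extracted by comparing two bounds on $|\cB|$. On one side, Example \ref{traceex} exhibits a nontrivial blocking set of $\PG(2,p^n)$ of size $p^n + p^{n(s-1)/s} + 1$; passing, if necessary, to a minimal sub-blocking set (which remains nontrivial, since no subset of a line-free set can contain a line), the smallest minimal nontrivial blocking set must satisfy
\[
|\cB| \le p^n + p^{n(s-1)/s} + 1.
\]
On the other side, the standard exponent-sensitive refinement of Sz\H onyi's $1 \pmod{p^e}$ theorem, cf.\ \cite{1modp, PoSt}, asserts that every small minimal nontrivial blocking set of exponent $e$ in $\PG(2,p^n)$ satisfies
\[
|\cB| \ge p^n + p^{n-e} + 1.
\]
Chaining these two inequalities gives $p^{n-e} \le p^{n(s-1)/s} = p^{n - n/s}$, whence $e \ge n/s$. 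Together with the previous upper bound this forces $e = n/s$.

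The main obstacle is simply locating (or, if the paper prefers, re-deriving in a self-contained form) the sharp lower bound $|\cB| \ge p^n + p^{n-e} + 1$ and verifying it applies in the present regime; the trace construction shows this bound is tight precisely at $e = n/s$, and it is exactly this tightness that forces the minimizer to attain the largest permissible exponent. The remaining steps — the short divisibility argument bounding $e$ above, and the routine check that the trace construction is both nontrivial and small (i.e.\ of size less than $3(p^n+1)/2$, so that the Sz\H onyi–Polverino bound is applicable) — are elementary size comparisons that offer no genuine difficulty.
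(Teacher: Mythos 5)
Your upper bound $e\le n/s$ is exactly the paper's argument and is fine, as is the reduction to comparing the trace construction of Example~\ref{traceex} against an exponent-sensitive lower bound. The gap is in the lower bound you invoke: the inequality $|\cB|\ge p^n+p^{\,n-e}+1$ for an arbitrary small minimal nontrivial blocking set of exponent $e$ is \emph{not} a ``standard refinement'' of Sz\H onyi's theorem --- it is essentially the conjectured sharp bound. What Sz\H onyi actually proves in \cite{1modp} is the strictly weaker estimate $|\cB|\ge p^n+1+\frac{p^n}{p^e+2}$, and note that $\frac{p^n}{p^e+2}<p^{\,n-e}$. The sharp form is known only in special cases: for $e=n/2$ (Baer subplanes), for $e=n/3$ with $p\ge 7$ by Polverino--Storme \cite{PoSt}, and for linear blocking sets under an extra hypothesis \cite{dbvdv}. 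A quick internal consistency check shows your cited bound cannot be a theorem: combined with $e\le n/s$ it would immediately yield $|\cB|\ge p^n+p^{\,n(s-1)/s}+1$ for every small minimal nontrivial blocking set, i.e.\ the $t=1$ case of Conjecture~\ref{conj0} in full generality, which the paper explicitly lists as open outside the cases $n=3$, $s=2$, and $s=3$ with $p\ge 7$.

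The repair is to run your comparison with Sz\H onyi's genuine bound instead. If $e<n/s$ then $e\le n/s-1$, and one checks directly that
\[
p^n+p^{\,n(s-1)/s}+1 \;<\; p^n+1+\frac{p^n}{p^{\,n/s-1}+2},
\]
which after simplification reduces to $p^{\,1-n/s}<\tfrac{p-1}{2}$; since $n$ is composite this holds for all $p>2$, and for $p=2$ whenever $n/s>2$. The single surviving corner case $n/s=2$ forces $s=2$ and $n=4$, where $\cB$ has the size of a Baer subplane of $\PG(2,16)$ and hence is one, with exponent $2=n/s$. This is the paper's route; it avoids the unproven sharp bound entirely at the cost of the explicit inequality and the $n=4$ case analysis. (Your remaining steps --- the divisibility bound $e\le n/s$ and the smallness of the trace construction --- are correct and match the paper.)
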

	\begin{proof}
		Since $e \mid n$, the statement is trivial when $n$ is prime, so assume that $n$ is composite.  
		Sz\H onyi proved in \cite{1modp} the inequality
		\[
		p^n + 1 + \frac{p^n}{p^e + 2} \;\leq\; |\cB|.
		\]
		Since $e<n$ and $e \mid n$, we must have $e \leq n/s$.  
		On the other hand, Example~\ref{traceex} shows the existence of minimal nontrivial blocking sets of size
		\[
		p^n + p^{\,n(s-1)/s} + 1.
		\]
		Thus, it remains to show that if $e < n/s$, then
		\[
		p^n + p^{\,n(s-1)/s} + 1 
		\;<\;
		p^n + 1 + \frac{p^n}{p^e + 2}.
		\]
		The right-hand side is the smallest when $e$ is the largest, i.e.\ when $e=n/s - 1$.  
		Substituting $e = n/s - 1$ and simplifying gives the inequality
		\[
		p^{\,n-1}\!\left(2p^{\,1-n/s} - p + 1\right) < 0,
		\]
		which is equivalent to $p^{\,1-n/s} < \tfrac{p-1}{2}$.  
		The left-hand side is maximal when $n/s$ is minimal.  
		Since $n$ is composite, $1 - n/s \leq -1$, so $p^{\,1-n/s} \leq p^{-1}$, and the inequality 
		$p^{-1} < (p-1)/2$ holds for all $p>2$, and also for $p=2$ whenever $n/2 > s$.
		
		If $s = n/2$, then $n = 2s$ and $n/2$ is the smallest prime divisor of $n$.  
		This forces $n/2 = 2$, so $n = 4$.  
		In this case, $\cB$ has the size of a Baer subplane in $\PG(2,16)$ and therefore is itself a 
		Baer subplane \cite{Bruen}.  
		Its exponent is $2 = n/s$, which completes the proof.
	\end{proof}

	When $t=1$, $s=3$, $p \ge 7$, then Conjecture \ref{conj0} follows from \cite{PoSt} together with Proposition~\ref{PoSt2}. 
	Indeed, in the case of $s=3$, the exponent of the smallest minimal nontrivial blocking sets is $e=n/3$, see 
	Proposition~\ref{PoSt2}. 
	Polverino and Storme proved in \cite{PoSt} that every small minimal blocking set 
	with exponent $n/3$ has size at least $p^n + p^{2n/3} + 1$, and that this lower bound is sharp. 
	Thus, the conjectured bound~\eqref{bound} holds for $t=1$, $s=3$, $p \ge 7$.
	
	\medskip
	
	The following lemma is useful when dealing with linear functionals:
	
	\begin{Lemma}\label{f_counting_lemma}
		Let $f \colon \Fqh\rightarrow\Fq$ be a nonzero, $\Fq$-linear functional. Then for any $k\in\Fq$, \[
		\lvert\{x\in\Fqh : f(x)=k\}\rvert=q^{h-1}.
		\]\qed 
	\end{Lemma}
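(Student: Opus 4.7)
The plan is to use the standard rank--nullity theorem for $\Fq$-linear maps. View $\Fqh$ as an $h$-dimensional vector space over $\Fq$, and $\Fq$ as a $1$-dimensional vector space over itself. Since $f$ is nonzero and its codomain is $1$-dimensional, the image of $f$ must be the whole of $\Fq$, so $f$ is surjective.

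Next, I would invoke rank--nullity: the kernel $\ker f$ is an $\Fq$-subspace of $\Fqh$ of dimension $h - \dim_{\Fq}(\im f) = h-1$, hence $|\ker f| = q^{h-1}$. This handles the case $k=0$.

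For arbitrary $k \in \Fq$, pick some $x_0 \in \Fqh$ with $f(x_0)=k$ (which exists by surjectivity). Then for every $x \in \Fqh$, the equation $f(x) = k$ is equivalent to $f(x - x_0) = 0$, i.e.\ $x - x_0 \in \ker f$. Thus the fiber $f^{-1}(k) = x_0 + \ker f$ is a coset of the kernel and therefore has the same cardinality as $\ker f$, namely $q^{h-1}$.

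Since each step is immediate from elementary linear algebra, there is really no obstacle here; the lemma is simply a convenient recording of the fact that the level sets of a nonzero $\Fq$-linear functional on $\Fqh$ partition $\Fqh$ into $q$ equal-sized pieces.
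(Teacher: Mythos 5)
Your proof is correct and is exactly the standard rank--nullity/coset argument that the paper has in mind; the paper in fact omits the proof entirely (the lemma is stated with an immediate \verb|\qed|), treating it as elementary. Nothing to add or correct.
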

	
	In the next result, we summarize the main properties of the trace construction. These are well-known results and they can be proved easily by using Lemma \ref{f_counting_lemma} and some linear algebra. 
	We will need this lemma to prove the minimality of our future constructions.
	
	%\begin{Lemma}\label{f_counting_lemma}
	%   Let $f:\Fqh\rightarrow\Fq$ be a nonzero, $\Fq$-linear functional. Then for any $k\in\Fq$, \[
	%  \lvert\{x\in\Fqh\ : f(x)=k\}\rvert=q^{h-1}.
	% \]
	%\end{Lemma}
	
	%By Proposition \ref{prop:trace} it is enough to prove the statements for a blocking set of the form $\{(x:f(x):y)\ |\ x\in\Fqh,y\in\Fq,(x,y)\neq(0,0)\}$, where $f:\Fqh\rightarrow\Fq$ is a nonzero linear functional, since they are all projectively equivalent, we will use this for the sake of convenience.
	
	\begin{Lemma}
		\label{tracelemma}
		Let $\cL$ denote the blocking set $\{(x:\Tr_{q^h/q}(x):y) : x\in \F_{q^h},y \in \F_q, (x,y)\neq (0,0)\}$ of $\PG(2,q^h)$, $h\geq 2$.
		\begin{enumerate}[\rm(1)]
			\item Each line meets $\cL$ in $1$, $(q+1)$, or in $(q^{h-1}+1)$ points.
			\item $(1:0:0)$ is the the only point of $\cL$ of weight $(h-1)$ and it is incident with $(q^h-q)$ tangent lines to $\cL$ and with $(q+1)$ lines meeting $\cL$ in $(q^{h-1}+1)$ points. These are the only $(q^{h-1}+1)$-secants of $\cL$.
			\item The other points of $\cL$ are of weight $1$ and they are incident with $q^h-q^{h-1}$ tangent lines to $\cL$. \qed
		\end{enumerate}  
	\end{Lemma}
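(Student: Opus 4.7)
The strategy is to work directly with the $\Fq$-subspace $U=\{(x,\Tr_{q^h/q}(x),y):x\in\Fqh,\,y\in\Fq\}$ of $V=\Fqh^3$, so that weights of points and lines are read off as $\Fq$-dimensions of intersections of $U$ with the corresponding $\Fqh$-subspaces of $V$. Throughout, Lemma~\ref{f_counting_lemma} controls fibres of $\Tr$ and of related $\Fq$-linear maps.

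For claim (2), I first compute $w((1:0:0))$ directly: the vectors of $U$ representing $(1:0:0)$ are the $(x,0,0)$ with $\Tr(x)=0$, and by Lemma~\ref{f_counting_lemma} these form an $(h-1)$-dimensional $\Fq$-subspace, so $w((1:0:0))=h-1$. To rule out any further high-weight point, I would use the identity $\sum_{P\in\cL}(q^{w(P)}-1)=|U^*|=q^{h+1}-1$: subtracting the contribution $q^{h-1}-1$ from $(1:0:0)$ leaves $(q^h+q^{h-1})(q-1)$ to be split among the remaining $q^h+q^{h-1}$ points, and since each weight is at least $1$, equality forces all of them to equal $1$.

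For claim (1), to each line $\ell:aX+bY+cZ=0$ I attach the $\Fq$-linear map $\phi_\ell:\Fqh\times\Fq\to\Fqh$, $(x,y)\mapsto ax+b\Tr(x)+cy$, and note $w(\ell)=(h+1)-\dim_{\Fq}\mathrm{im}\,\phi_\ell$. If $(1:0:0)\in\ell$ (i.e.\ $a=0$), then $w(\ell)\ge w((1:0:0))=h-1$, while $w(\ell)=h+1$ would force $\cL\subseteq\ell$, impossible since $|\cL|>q^h+1$. The two cases $w(\ell)\in\{h-1,h\}$ then give a tangent at $(1:0:0)$ and, using that the remaining $q^h-q^{h-1}$ nonzero vectors of $U\cap S$ split into $(q^h-q^{h-1})/(q-1)=q^{h-1}$ weight-$1$ points, a $(q^{h-1}+1)$-secant. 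If $(1:0:0)\notin\ell$ (i.e.\ $a\neq 0$), then the restriction $\phi'(x)=ax+b\Tr(x)$ has kernel contained in the $1$-dimensional $\Fq$-space $\Fq\cdot(b/a)$ (solving $ax=-b\Tr(x)$ with $\Tr(x)\in\Fq$ forces $x\in\Fq\cdot(b/a)$), so $\dim\mathrm{im}\,\phi'\ge h-1$, whence $\dim\mathrm{im}\,\phi_\ell\ge h-1$ and $w(\ell)\le 2$. Since every point of $\cL$ on such an $\ell$ has weight $1$, $|\ell\cap\cL|=(q^{w(\ell)}-1)/(q-1)\in\{1,q+1\}$.

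The counts in (2) and (3) then reduce to double-counting. Through $P=(1:0:0)$ every line is either a tangent or a $(q^{h-1}+1)$-secant, so the $q^h+q^{h-1}$ remaining points of $\cL$ distribute into $(q^h+q^{h-1})/q^{h-1}=q+1$ such secants and $q^h-q$ tangents. Through any other $P\in\cL$, the unique line $\langle P,(1:0:0)\rangle$ is a $(q^{h-1}+1)$-secant carrying $q^{h-1}$ other points of $\cL$, so the remaining $q^h$ points of $\cL\setminus\{P\}$ lie on $(q+1)$-secants through $P$, yielding $q^{h-1}$ such secants and $q^h-q^{h-1}$ tangents. The only nontrivial step is the bound $w(\ell)\le 2$ for $\ell$ not through $(1:0:0)$; once the kernel of $\phi'$ is pinned down to a single $\Fq$-line, everything else is arithmetic.
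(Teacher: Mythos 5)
Your proof is correct and is precisely the ``Lemma~\ref{f_counting_lemma} plus some linear algebra'' argument that the paper alludes to but does not write out (the lemma is stated with its proof omitted as well known): the weight identity $\sum_{P\in\cL}\bigl(q^{w(P)}-1\bigr)=q^{h+1}-1$ pinning down the unique weight-$(h-1)$ point, the rank--nullity analysis of $\phi_\ell$ splitting lines according to whether they pass through $(1:0:0)$, and the final double counts are all sound. The one caveat is inherited from the statement rather than from your argument: for $h=2$ one has $q^{h-1}+1=q+1$, so the clause that the $(q+1)$ long secants through $(1:0:0)$ are the \emph{only} $(q^{h-1}+1)$-secants degenerates; your case analysis establishes it cleanly for $h\ge 3$.
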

	
	\begin{Lemma}\label{tangent_outside_lemma}
		Let $\cL$ be a blocking set in $\FGq$, $h\geq 2$,  equivalent to the trace construction, $P$ a point not in $\cL$. Then $P$ is incident with at least $q^h-q^{h-2}+1>0$ tangent lines to $\cL$.
	\end{Lemma}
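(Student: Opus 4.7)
The plan is to obtain the bound by a straightforward two-equation line count through $P$, combined with the structural information that all long secants of $\cL$ are concurrent. Let $a$, $b$, $c$ denote the number of lines through $P$ meeting $\cL$ in $1$, $q+1$, and $q^{h-1}+1$ points respectively; by Lemma \ref{tracelemma}(1) every line through $P$ is of one of these three types, so the count is exhaustive.

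The first step is to write the two obvious equations: $a+b+c=q^h+1$ (total lines through a point) and $a+b(q+1)+c(q^{h-1}+1)=|\cL|=q^h+q^{h-1}+1$ (double count of incidences between $\cL$ and the pencil at $P$). Subtracting the first from the second kills $a$ and gives $bq + c\,q^{h-1}=q^{h-1}$, which rearranges to $b=q^{h-2}(1-c)$. Since $b\geq 0$, this already forces $c\in\{0,1\}$.

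The second step is to verify that $c\leq 1$ on geometric grounds so that the algebra above is consistent, and to finish by case analysis. By Lemma \ref{tracelemma}(2), all $(q^{h-1}+1)$-secants of $\cL$ pass through the single weight-$(h-1)$ point, which lies in $\cL$; since $P\notin\cL$, any two distinct long secants through $P$ would have to meet at that point as well, forcing $P$ to equal it, a contradiction. Hence $c\in\{0,1\}$. If $c=1$ then $b=0$ and $a=q^h$; if $c=0$ then $b=q^{h-2}$ and $a=q^h-q^{h-2}+1$. In either case $a\geq q^h-q^{h-2}+1$, and positivity is clear for $h\geq 2$.

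There is essentially no obstacle here: the only thing one must notice, beyond the double-counting identity, is the concurrency of long secants at the weight-$(h-1)$ point, which is exactly the content of Lemma \ref{tracelemma}(2). Everything else is immediate arithmetic.
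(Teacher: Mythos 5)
Your proof is correct and takes essentially the same route as the paper: both rest on the same double count of incidences between $\cL$ and the pencil of lines through $P$, using Lemma \ref{tracelemma}(1) to restrict the intersection sizes and Lemma \ref{tracelemma}(2) to rule out two long secants through $P$. The only cosmetic difference is that you solve the two linear equations uniformly and read off $c\in\{0,1\}$ from $b\geq 0$, whereas the paper splits into cases on the number of $(q^{h-1}+1)$-secants from the start; the resulting values $a=q^h$ and $a=q^h-q^{h-2}+1$ agree with the paper's.
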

	\begin{proof}
		Let $k$ be the number of tangents to $\cL$ incident with $P$. Counting the number of points of $\cL$ on the $q^h+1$ lines incident with $P$ gives the following:
		\begin{enumerate}[\rm(1)]
			\item If $P$ is not incident with $(q^{h-1}+1)$-secants of $\cL$, then:
			\[
			1\cdot k+(q+1)(q^h+1-k)=q^h+q^{h-1}+1.
			\]
			Solving this linear equation for $k$, we get $k=q^h-q^{h-2}+1$.
			\item If $P$ is incident with a unique $(q^{h-1}+1)$-secant of $\cL$, then:
			\[
			(q^{h-1}+1)\cdot1+1\cdot k+(q+1)(q^h-k).
			\]
			Solving for $k$, we get $k=q^h$.
			\item If $P$ is incident with at least two $(q^{h-1}+1)$-secants of $\cL$, then by Lemma \ref{tracelemma} $(2)$, $P\in \cL$, a contradiction.
		\end{enumerate}
		The result follows, since $q^h\geq q^h-q^{h-2}+1$ for $h\geq2$.
	\end{proof}
	
	\begin{Prop}\label{2fold_minimal}
		Let $\cL_1$ and $\cL_2$ be two disjoint blocking sets in $\FGq$, $h\geq 2$, both projectively equivalent to the trace construction. Then each point of $\cL_1\cup \cL_2$ is incident with at least $(q^h-q^{h-1}-q^{h-2})$ $2$-secants. In particular, $\cL_1\cup \cL_2$ is a minimal double blocking set.
	\end{Prop}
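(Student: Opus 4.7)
The plan is to fix an arbitrary point $P \in \cL_1 \cup \cL_2$ and, by the symmetric roles of the two blocking sets, assume without loss of generality that $P \in \cL_1$. The crucial observation is that because $\cL_1 \cap \cL_2 = \emptyset$, a line $\ell$ through $P$ is a $2$-secant of $\cL_1 \cup \cL_2$ if and only if $\ell$ is tangent to $\cL_1$ at $P$ (so $|\ell \cap \cL_1|=1$) and tangent to $\cL_2$ (so $|\ell \cap \cL_2|=1$). Thus the proposition reduces to a lower bound on the number of lines through $P$ that are simultaneously tangent to $\cL_1$ at $P$ and tangent to $\cL_2$; minimality will then follow immediately from Definition~\ref{minimal}.

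For the tangents to $\cL_1$ through $P$, I would invoke Lemma~\ref{tracelemma}(2)--(3): either $P$ is the unique weight-$(h-1)$ point of $\cL_1$ and lies on $q^h-q$ tangents, or $P$ has weight $1$ and lies on $q^h-q^{h-1}$ tangents. The worst case for a lower bound is the weight-$1$ case, so the number of lines through $P$ that are \emph{not} tangent to $\cL_1$ is at most $(q^h+1)-(q^h-q^{h-1})=q^{h-1}+1$. For the tangents to $\cL_2$ I would apply Lemma~\ref{tangent_outside_lemma}, which applies because $P \notin \cL_2$: it gives at least $q^h-q^{h-2}+1$ tangents to $\cL_2$ through $P$, so at most $q^{h-2}$ non-tangents.

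A straightforward complement count over the $q^h+1$ lines incident with $P$ then yields at least
\[
(q^h+1)-(q^{h-1}+1)-q^{h-2}=q^h-q^{h-1}-q^{h-2}
\]
lines through $P$ that are tangent to $\cL_1$ at $P$ and tangent to $\cL_2$, i.e.\ $2$-secants of $\cL_1\cup\cL_2$ containing $P$. Factoring as $q^{h-2}(q^2-q-1)$, this quantity is strictly positive for every $q\geq 2$ and $h\geq 2$, so every point of $\cL_1 \cup \cL_2$ lies on at least one $2$-secant, which is exactly the minimality criterion of Definition~\ref{minimal}.

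I do not expect a serious obstacle here: the whole argument is just combining Lemmas~\ref{tracelemma} and~\ref{tangent_outside_lemma} via a union bound on the pencil through $P$. The only subtlety is noting that the claimed bound $q^h-q^{h-1}-q^{h-2}$ corresponds to the worst case in which $P$ has weight $1$ in $\cL_1$; if $P$ were the weight-$(h-1)$ point of $\cL_1$ the same count would produce the larger bound $q^h-q-q^{h-2}$ (which agrees with the stated bound when $h=2$), so it suffices to record the weight-$1$ estimate as the universal lower bound.
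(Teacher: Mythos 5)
Your proposal is correct and follows essentially the same route as the paper: both arguments bound the number of tangents to $\cL_1$ through $P$ via Lemma~\ref{tracelemma} (taking the weight-$1$ case $q^h-q^{h-1}$ as the worst case), bound the tangents to $\cL_2$ via Lemma~\ref{tangent_outside_lemma}, and combine them by inclusion--exclusion on the pencil of $q^h+1$ lines through $P$; your complement count is just a rephrasing of that. The observation that a $2$-secant of the disjoint union is precisely a common tangent, and the positivity check $q^{h-2}(q^2-q-1)>0$, match the paper's conclusion.
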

	\begin{proof}
		Since $\cL_1\cap \cL_2=\emptyset$, it is clear that $\cL_1\cup \cL_2$ is a 2-fold blocking set. 
		%Now we show that every point of $\cL_1\cup\cL_2$ is incident with at least $q^h-q^{h-1}-q^{h-2}$ 2-secants of $\cL_1\cup \cL_2$. 
		
		Take a point $P$ from $\cL_1$. By Lemma \ref{tracelemma}, $P$ is incident with at least $q^h-q^{h-1}$ tangent lines to $\cL_1$, and, by Lemma \ref{tangent_outside_lemma}, $P$ is incident with at least $q^h-q^{h-2}+1$ tangent lines to $\cL_2$. Since $P$ is incident with $(q^h+1)$ lines, according to the inclusion-exclusion principle, there are at least \[
		(q^h-q^{h-1}) + (q^h-q^{h-2}+1)-(q^h+1)=q^h-q^{h-1}-q^{h-2}
		\]
		lines incident with $P$ that are tangent to both $\cL_1$ and $\cL_2$. Interchanging the role of $\cL_1$ and $\cL_2$ yields the same result for points of $\cL_2$. Since $q^h-q^{h-1}-q^{h-2}$ is positive, each point of $\cL_1\cup \cL_2$ is incident with a $2$-secant, and hence it is a minimal double blocking set.
	\end{proof}
	
	\begin{Prop}
		\label{minbs}
		Let $q>2$ be a prime power, $h\geq 2$, and $\cL_1,\cL_2,\cL_3$  be pairwise disjoint blocking sets in $\FGq$, all projectively equivalent to the trace construction. Then $\cL_1\cup \cL_2\cup \cL_3$ is a minimal $3$-fold blocking set.
	\end{Prop}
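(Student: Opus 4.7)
The plan is to mirror the argument of Proposition~\ref{2fold_minimal} but for three pairwise disjoint blocking sets. Without loss of generality I take a point $P\in\cL_1$ and exhibit a line through $P$ that meets $\cL_1\cup\cL_2\cup\cL_3$ in exactly three points. Since the three sets are pairwise disjoint, such a $3$-secant is precisely a line through $P$ that is tangent to each $\cL_i$ individually: it contributes $P$ from $\cL_1$ and exactly one additional point from each of $\cL_2$ and $\cL_3$. By symmetry it suffices to handle $P\in\cL_1$.

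Next I produce a lower bound on the number of tangent lines to each $\cL_i$ through $P$. By Lemma~\ref{tracelemma} there are at least $q^h-q^{h-1}$ lines through $P$ tangent to $\cL_1$; this is indeed the minimum over $\cL_1$, since the unique weight-$(h-1)$ point carries $q^h-q\geq q^h-q^{h-1}$ tangents whenever $h\geq 2$. By Lemma~\ref{tangent_outside_lemma}, since $P$ lies outside both $\cL_2$ and $\cL_3$, at least $q^h-q^{h-2}+1$ lines through $P$ are tangent to each of them.

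Letting $A_i$ denote the set of lines through $P$ tangent to $\cL_i$ and complementing within the pencil of $q^h+1$ lines through $P$, a union bound on $A_1^c\cup A_2^c\cup A_3^c$ gives
\[
|A_1\cap A_2\cap A_3|\;\geq\;|A_1|+|A_2|+|A_3|-2(q^h+1).
\]
Substituting the three lower bounds collapses the right-hand side to $q^h-q^{h-1}-2q^{h-2}=q^{h-2}(q-2)(q+1)$, which is strictly positive precisely under the hypothesis $q>2$. Any line in $A_1\cap A_2\cap A_3$ is the desired $3$-secant through $P$, and minimality of $\cL_1\cup\cL_2\cup\cL_3$ follows.

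The whole argument is a short double-counting on top of the two preceding lemmas; the only genuinely delicate point is the sharpness of the final estimate. The expression $q^{h-2}(q-2)(q+1)$ vanishes at $q=2$, which is exactly why the statement excludes that case. So the ``obstacle'' is not a computational one but rather a verification that the same union-bound strategy, which had plenty of slack in the two-fold case, becomes tight at $q=2$ in the three-fold case and therefore genuinely requires the hypothesis $q>2$.
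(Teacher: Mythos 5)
Your proof is correct and follows essentially the same route as the paper: the same two lemmas, an inclusion--exclusion bound on the pencil of $q^h+1$ lines through $P$, and the identical final count $q^h-q^{h-1}-2q^{h-2}$, positive exactly when $q>2$. The only cosmetic difference is that you run a direct three-way union bound where the paper first intersects the tangents to $\cL_1$ and $\cL_2$ via Proposition~\ref{2fold_minimal} and then brings in $\cL_3$; the arithmetic is the same, and your factorization $q^{h-2}(q-2)(q+1)$ is a nice way to see why $q=2$ must be excluded.
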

	\begin{proof}
		Again we see that $\cL_1\cup \cL_2\cup \cL_3$ is clearly a 3-fold blocking set.\\
		Take a point $P$ from $\cL_1$. According to  Proposition \ref{2fold_minimal}, there are at least $q^h-q^{h-1}-q^{h-2}$ $2$-secants of $\cL_1\cup L_2$ incident with $P$. By Lemma \ref{tangent_outside_lemma}, $P$ is on at least $q^h-q^{h-2}+1$ tangent lines to $\cL_3$. This means that there are at least \[(q^h-q^{h-1}-q^{h-2})+(q^h-q^{h-2}+1)-(q^h+1)=q^h-q^{h-1}-2q^{h-2}\] lines incident with $P$ that are $2$-secants of $\cL_1\cup \cL_2$ and tangents to $\cL_3$, i.e. they are $3$-secants of $\cL_1\cup \cL_2\cup \cL_3$. If $q>2$, then his number is positive. Again, using the same argument for points of $\cL_2$ and $\cL_3$, we see that $\cL_1\cup \cL_2\cup \cL_3$ is a minimal $3$-fold blocking set.
	\end{proof}
	
	\section{Double blocking sets}\label{sec:2}
	
	\subsection{The upper chromatic number of finite projective planes}
	\label{subsec:chrom}
	
	Before moving on to the construction of a $2$-fold blocking set (also known as a double blocking set), we provide some motivation as to why the minimum size of double blocking sets matters. We take a look at the proper colorings of finite projective planes and their connection to $2$-fold blocking sets.  The results presented in this section are based on the work of Bacs\'o, H\'eger and Sz\H onyi. For further information on the topic, we recommend reading \cite{dbs1}.\\
	
	A \textit{hypergraph} \(\mathcal{H}\) consists of an underlying set \(X\) of vertices and a system of sets on \(X\) that define the hyperedges of the graph. A coloring assigns each of the vertices a color. A \textit{proper coloring} is a coloring such that each hyperedge has at least two vertices that have the same color.
	
	\begin{Def}
		The upper chromatic number of \(\mathcal{H}\) is the maximum number of colors that can be used in a proper coloring. Notation: \(\overline{\chi}(\mathcal{H})\).
	\end{Def}
	
	A finite projective plane $\Pi$ is a hypergraph, the underlying set of vertices being the points of the plane, and the hyperedges being defined by the lines. If we take a coloring of the plane with colors \(c_1,\dots,c_k\), then the sets \(C_i\), the set of points colored \(c_i\), partition the plane's point set. Moreover, if the coloring is proper, then the union \(\bigcup_{|C_j|\geq2} C_{j}\) is a double blocking set. Conversely, a double blocking set defines a proper coloring: assign the same color to all points of the double blocking set, and distinct colors to all points outside it.  
	When the double blocking set is of minimum size (\(\tau_2\)), then these proper colorings are called \textit{trivial colorings}. Such colorings use \(v-\tau_2+1\) colors, where $v$ is the number of points of $\Pi$. Thus,
	\[\overline{\chi}(\Pi)\geq v-\tau_2+1.\]
	
	Furthermore, it was shown that if \(q>256\) is a square,
	or \(p>29\) and \(h\geq3\) is odd, then \(\overline{\chi}(\PG(2,p^h))= v-\tau_2+1\), see \cite[Theorem 1.12]{dbs1}. So, the value of \(\tau_2\) is of particular interest.
	
	\subsection{Construction of two disjoint blocking sets}
	
	As stated before, we aim to construct a $2$-fold blocking set of size $2(q^h+q^{h-1}+1)$ in $\FGq$ by finding two disjoint blocking sets, each of size $q^h+q^{h-1}+1$. Then, their union will be a double blocking set of the desired size.\\\\
	The following lemma will be useful:\begin{Lemma}\label{uvw}
		Let $\vec u, \vec v, \vec w$ be three linearly independent vectors in $\Fqh^3$, and let $f \colon \Fqh\rightarrow \Fq$ be a nonzero linear functional. Define the subspace \[U:=\{x\vec u+f(x)\vec v+y\vec w :  x\in\Fqh,y\in\Fq\}.\] Then $\cL_U$ is a blocking set of size $q^h+q^{h-1}+1$.
		
	\end{Lemma}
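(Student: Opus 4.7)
The plan is to reduce the situation to the trace construction of Example~\ref{traceex} (with $s=h$) by two successive projectivities of $\PG(2,q^h)$, after which the conclusion is immediate from the size computed there.

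First I would observe that, since $\{\vec u,\vec v,\vec w\}$ is an $\F_{q^h}$-basis of $V=\F_{q^h}^3$, the $\F_{q^h}$-linear automorphism $\psi$ of $V$ sending the standard basis to $\{\vec u,\vec v,\vec w\}$ induces a projectivity of $\PG(2,q^h)$ that maps $\cL_{U'}$ onto $\cL_U$, where
\[
U':=\{(x,f(x),y):x\in\F_{q^h},\,y\in\F_q\}.
\]
So it suffices to prove the statement for $U'$ in place of $U$.

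Next, because $f$ is a nonzero $\F_q$-linear functional on $\F_{q^h}$, the non-degeneracy of the trace pairing produces a unique $\beta\in\F_{q^h}^*$ with $f(x)=\Tr_{q^h/q}(\beta x)$. The projectivity $(a:b:c)\mapsto(\beta a:b:c)$ then carries $\cL_{U'}$ onto the trace-construction blocking set
\[
\{(x:\Tr_{q^h/q}(x):y):x\in\F_{q^h},\,y\in\F_q,\,(x,y)\neq(0,0)\}
\]
of Example~\ref{traceex}, whose size is $q^h+q^{h-1}+1$. Since projectivities preserve both the blocking set property and cardinality, this finishes the argument.

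As a sanity check, the blocking set property can also be read off directly from Lunardon's construction, independently of the identification with the trace case: the subspace $U$ has $\F_q$-dimension exactly $h+1$, because the $\F_q$-linear map $(x,y)\mapsto x\vec u+f(x)\vec v+y\vec w$ is injective by the $\F_{q^h}$-linear independence of $\{\vec u,\vec v,\vec w\}$, and then Grassmann's identity in the $\F_q$-vector space $V$ of dimension $3h$ forces a nontrivial intersection of $U$ with every $2$-dimensional $\F_{q^h}$-subspace of $V$. I do not anticipate a substantive obstacle; the only point requiring care is the explicit verification that $\psi^{-1}(U)=U'$, which is a direct computation from the $\F_{q^h}$-linearity of $\psi$.
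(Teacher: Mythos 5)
Your proof is correct, but it takes a different route from the paper's. The paper verifies the two hypotheses of Proposition~\ref{prop:trace} --- that $\cL_U$ is a nontrivial blocking set (by exhibiting the three non-collinear points $\la\vec u\ra_{\Fqh}$, $\la a\vec u+f(a)\vec v\ra_{\Fqh}$, $\la\vec w\ra_{\Fqh}$) and that $\la\vec u\ra_{\Fqh}$ has weight $h-1$ --- and then invokes that characterization to conclude projective equivalence with the trace construction. You instead build the equivalence explicitly: the basis-change projectivity reduces $U$ to $U'=\{(x,f(x),y)\}$, and the non-degeneracy of the trace form plus the projectivity $(a:b:c)\mapsto(\beta a:b:c)$ identifies $\cL_{U'}$ with the trace construction outright. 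Your argument is more self-contained and avoids both the weight computation and the nontriviality check (in effect you are inlining the tail end of the paper's proof of Proposition~\ref{prop:trace}, where $f(x)=\alpha\Tr(\beta x)$ is extracted, in the easier situation where the normal form of $W$ is already given); the paper's route buys reusability, since Proposition~\ref{prop:trace} is a statement about arbitrary $(h+1)$-dimensional subspaces and is the natural tool once it is available. One small presentational point: Example~\ref{traceex} is phrased for $q=p^{n/s}$ with $s$ the smallest prime divisor of $n$, so when you cite it ``with $s=h$'' you are implicitly using the remark following that example (and Lemma~\ref{tracelemma}) that the construction and its size $q^h+q^{h-1}+1$ are valid for an arbitrary extension $\Fqh/\Fq$; this is harmless but worth saying explicitly. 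Your closing sanity check via Grassmann's identity correctly re-derives the blocking property and $\dim_{\Fq}U=h+1$, mirroring Lunardon's argument quoted in Section~\ref{Sec:2.2}.
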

	\begin{proof}
		Since \(\dim_{\Fq} U=h+1\), by Proposition \ref{prop:trace}, it is sufficient to prove that \(\cL_U\) is a nontrivial blocking set with a point of weight \(h-1\). A line must have weight $(h+1)$ to be contained in $\cL_U$, so $\cL_U$ is trivial if and only if it is a line.  
		There exists an \(a\in\Fqh\) such that \(f(a)\neq0\). Then the nontriviality of \(\cL_U\) follows from the fact that the points \(\la\vec u \ra_{\F_{q^h}}\), \(\la a\vec u+f(a)\vec v \ra_{\F_{q^h}}\) and \(\la\vec w \ra_{\F_{q^h}}\) of $\cL_U$ are not collinear, since \(\vec u,\vec v\) and \(\vec w\) were chosen to be linearly independent over \(\Fqh\).
		
		The point \(\la\vec u \ra_{\F_{q^h}}\) in \(\cL_U\) has weight \(h-1\), because \[U\cap \la\vec u \ra_{\F_{q^h}}=\{x\vec u+f(x)\vec v+y\vec w : x\in\Fqh,f(x)=y=0\}\] is an $(h-1)$-dimensonal \(\Fq\)-subspace.
		
		And so, \(\cL_U\) is projective equivalent to the blocking set presented in Example \ref{traceex}, which has size \(q^h+q^{h-1}+1\).
	\end{proof}
	
	\begin{Rem} \label{redei}
		Note that because of the $\F_q$-linearity of $f$, if $y\not=0$, then one can assume $y=1$. If we have $\vec u=(1,0,0),\vec v=(0,1,0),\vec w=(0,0,1)$, then the points with $y=0$ and $y=1$ correspond to the ideal and affine points of the blocking set, respectively.
	\end{Rem}
	
	We will search for two disjoint blocking sets of the following form:\[
	\cL=\{(x:f(x):y) : x\in \Fqh,y\in\Fq, (x,y)\neq (0,0)\},\]
	\[\cL'=\{(y':x':g(x')+y'\alpha) : x'\in\Fqh,y'\in\Fq, (x',y')\neq (0,0)\},
	\]
	where $f,g \colon \Fqh\rightarrow\Fq$ are nonzero linear functionals, and $\alpha\in\Fqh\setminus\Fq$ is a constant. Note that $\cL'$ is also a blocking set of size $q^h+q^{h-1}+1$ because of Lemma \ref{uvw} ($\vec u=(0,1,0),\vec v=(0,0,1),\vec w=(1,0,\alpha)$).
	
	\begin{Th}\label{2fold_construction}
		The blocking sets $\cL$ and $\cL'$ are disjoint if the following conditions hold:\begin{enumerate}[\rm(1)]
			\item $g(1)=g(\alpha)=1$,
			\item $f\left(\frac1\alpha\right)\not=0$,
			\item $f\left(\frac{k-1}{k(k-1)\alpha-k^2}\right)\not=1$ for all $k\in\Fq$, $k\not=0,1$.
		\end{enumerate}
	\end{Th}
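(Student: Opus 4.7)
The plan is to assume for contradiction that some point $P \in \cL \cap \cL'$ and extract a contradiction from each of the conditions (1)--(3) in turn. Picking a representative of $P$, there exist $(x,y) \in \F_{q^h}\times \F_q$ with $(x,y)\ne(0,0)$, a pair $(x',y') \ne (0,0)$ likewise, and $\lambda \in \F_{q^h}^*$, such that
\[
y' = \lambda x, \qquad x' = \lambda f(x), \qquad g(x') + y'\alpha = \lambda y.
\]
The first move is to dispose of the ideal points of $\cL$: if $y = 0$, the third equation reads $g(x') = -y'\alpha$; the left-hand side lies in $\F_q$ while the right does not unless $y' = 0$, and then $\lambda \ne 0$ forces $x = 0$, hence also $x' = 0$, a contradiction.

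For $y \ne 0$, after $\F_q$-rescaling one may normalize $y = 1$. The case $x = 0$ is immediately excluded (it makes $y' = 0$ and $x' = 0$), so $\lambda = y'/x$ with $y' \ne 0$, and one substitutes $x' = y' f(x)/x$. Using the $\F_q$-linearity of $g$ together with $y'f(x) \in \F_q$, the third equation (after dividing by $y'$) becomes $f(x)\,g(1/x) + \alpha = 1/x$, so $c := 1/x - \alpha$ lies in $\F_q$ and $x = 1/(c+\alpha)$. Condition~(1) supplies $g(c+\alpha) = c\,g(1) + g(\alpha) = c+1$, which reduces the preceding equation to the key relation
\[
(c+1)\, f\!\left(\frac{1}{c+\alpha}\right) = c, \qquad c \in \F_q.
\]

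A case analysis on $c$ finishes the argument. The value $c = -1$ forces the impossible $0 = -1$; the value $c = 0$ gives $f(1/\alpha) = 0$, contradicting (2); for any remaining $c \in \F_q \setminus \{0,-1\}$, set $k := c/(c+1)$, so that $k \in \F_q \setminus \{0,1\}$ and $c = k/(1-k)$. A short manipulation (clearing denominators in $c + \alpha$ and factoring $k$ out of the denominator) yields the proportionality
\[
\frac{1}{c+\alpha} \;=\; k\cdot \frac{k-1}{k(k-1)\alpha - k^2},
\]
and by the $\F_q$-linearity of $f$ the key relation rewrites as $f\bigl((k-1)/(k(k-1)\alpha - k^2)\bigr) = 1$, contradicting (3). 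The only substantive obstacle is this last identification: recognizing the change of variable $k = c/(c+1)$ that converts the derived equation into the exact expression appearing in condition~(3), which in turn explains (and necessitates) its peculiar form.
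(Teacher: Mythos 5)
Your proof is correct and follows essentially the same route as the paper's: dispose of the ideal points of $\cL$ and $\cL'$ first, then in the affine case reduce to an algebraic relation over $\F_q$ and split into the three subcases matching conditions (2), (3), and the degenerate one. The only cosmetic difference is parametrization — you work with the single parameter $c = 1/x - \alpha$ and recover $k = c/(c+1)$ at the end, whereas the paper sets $k = f(x)$ and $l = g(x')$ from the start and derives the coupled relation $(k-1)l = -k$; the computations are identical under $c = l$.
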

	\begin{proof}
		Let $P\in \cL$, $P'\in \cL'$, meaning \[
		P=(x:f(x):y), \quad P'=(y':x':g(x')+y'\alpha),
		\] for some $x,x'\in\Fqh$, $y,y'\in\Fq$. Assume that the conditions hold. We want to show $P\not=P'$. We will do this by checking case by case based on whether $y$ and $y'$ are $0$ or $1$.
		We begin by indirectly assuming $P=P'$.
		\begin{enumerate}
			\item $y=0$, meaning \[(x:f(x):0)=(y':x':g(x')+y'\alpha).\]
			Then $g(x')+y'\alpha=0$, but because of $\alpha\not\in\Fq$, that means $g(x')=y'=0$, so $P'=(0:x':0)=(0:1:0)$. This means $x=0$ and $f(x)\not=0$, a contradiction.
			\item $y'=0$, meaning \[(x:f(x):y)=(0:x':g(x')).\]
			Therefore $x=0$, so $P=(0:f(0):y)=(0:0:1)$, but then $x'=0$ and $g(x')\not=0$, a contradiction.
			\item $y=y'=1$, meaning \[(x:f(x):1)=(1:x':g(x')+\alpha).\]
			We introduce $k=f(x)$ and $l=g(x')$ (note that both $k$ and $l$ are in $\Fq$). Now $P=P'$ exactly if \[
			x=\frac1{l+\alpha}\text{, and } x'=k(l+\alpha)
			.\] %(we get this from cross multiplying the third coordinate by the first and second coordinates, respectively). 
			Applying $f$ and $g$ to these equations, we get
			\begin{equation}
				\label{first}
				k=f\left(\frac1{l+\alpha}\right)
			\end{equation}        
			and 
			\[l=g(kl+k\alpha)=klg(1)+kg(\alpha)=kl+k.\]        %using the linearity of $g$ and $g(1)=g(\alpha)=1$. 
			The second equation can be rearranged as 
			\begin{equation}
				\label{second}
				(k-1)l=-k.
			\end{equation}
			Now we show that, for every $k\in\Fq$, we obtain a contradiction.
			\begin{enumerate}
				\item $k=0$: From \eqref{second} we get $l=0$. Substituting this into \eqref{first} yields $0=f\left(\frac1\alpha\right)$, which contradicts condition $(2)$ of the statement.
				\item $k=1$: Substituting into \eqref{second}, we obtain $k=0$, a contradiction.
				\item $k\not=0,1$: Dividing  \eqref{first} by $k$, we obtain
				\[1=f\left(\frac{1}{k(l+\alpha)}\right)=f\left(\frac{k-1}{k(k-1)\alpha+kl(k-1)}\right)=f\left(\frac{k-1}{k(k-1)\alpha-k^2}\right),\] which contradicts  condition $(3)$ of the statement.
			\end{enumerate}
		\end{enumerate}
	\end{proof}
	
	\begin{Prop}\label{2fold_existence}
		For any nonzero $f:\Fqh\rightarrow\Fq$, there exist $\alpha$ and $g$ such that the conditions in Theorem \ref{2fold_construction} hold.
	\end{Prop}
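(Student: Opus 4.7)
The plan is to first pick $\alpha \in \F_{q^h} \setminus \F_q$ so that conditions (2) and (3) of Theorem \ref{2fold_construction} hold, and then to construct $g$ by extending the assignment $1 \mapsto 1,\ \alpha \mapsto 1$ to an $\F_q$-linear functional. Since $1$ and $\alpha$ are $\F_q$-linearly independent whenever $\alpha \notin \F_q$, the second step presents no difficulty: I would extend $\{1,\alpha\}$ to an $\F_q$-basis of $\F_{q^h}$ and define $g$ arbitrarily (say as $0$) on the remaining basis vectors, obtaining a nonzero $\F_q$-linear functional that satisfies condition (1).

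The heart of the argument is the first step, which I would handle by a union-bound count of the \emph{bad} values of $\alpha$. The subfield $\F_q$ contributes $q$ bad values. Condition (2) excludes those $\alpha \neq 0$ for which $1/\alpha$ lies in $\ker f$; by Lemma \ref{f_counting_lemma} this adds at most $q^{h-1}-1$ further bad values. For condition (3), I would first rewrite
$\frac{k-1}{k(k-1)\alpha - k^2}$ as $\frac{1}{k\alpha - k^2/(k-1)}$ (valid since $k \neq 0,1$, and the denominator is nonzero because $\alpha = k/(k-1) \in \F_q$ is already excluded). The condition fails precisely when $k\alpha - k^2/(k-1)$ is the reciprocal of some element of the level set $f^{-1}(1)$, which has size $q^{h-1}$ again by Lemma \ref{f_counting_lemma}. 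Since for each fixed $k$ the map $\alpha \mapsto k\alpha - k^2/(k-1)$ is a bijection on $\F_{q^h}$, at most $q^{h-1}$ values of $\alpha$ are bad per $k \in \F_q \setminus \{0,1\}$, for at most $(q-2)q^{h-1}$ bad values in total.

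Summing these contributions, the total number of bad $\alpha$ is at most
\[
q + (q^{h-1}-1) + (q-2)q^{h-1} \;=\; (q-1)(q^{h-1}+1),
\]
so at least $q^h - (q-1)(q^{h-1}+1) = q^{h-1} - q + 1$ good values of $\alpha$ remain. This quantity is positive for every $h \geq 2$ and every prime power $q$, which completes the existence proof.

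I do not anticipate any serious obstacle: the whole argument rests on the union bound together with Lemma \ref{f_counting_lemma}. The only point requiring some care is the algebraic simplification of the rational expression in condition (3) and confirming that for $\alpha \notin \F_q$ the denominator $k\alpha - k^2/(k-1)$ never vanishes, so that passing to reciprocals is legitimate.
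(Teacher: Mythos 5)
Your proposal is correct and follows essentially the same route as the paper: a union bound over the bad values of $\alpha$ (the $q$ elements of $\F_q$, the $q^{h-1}-1$ failures of condition (2), and $q^{h-1}$ failures of condition (3) for each $k\in\F_q\setminus\{0,1\}$, counted via Lemma \ref{f_counting_lemma} and the bijectivity of the relevant M\"obius-type map), yielding at least $q^{h-1}-q+1>0$ good choices of $\alpha$, followed by defining $g$ on an $\F_q$-basis extending $\{1,\alpha\}$. No gaps.
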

	\begin{proof}
		First, we show that there exists an $\alpha\in\Fqh\setminus\Fq$ such that the second and third conditions in Theorem \ref{2fold_construction} hold. Then we choose a suitable $g$.
		
		We denote by $B_0$ the set of $\alpha\in\Fqh$ where the second condition does not hold, and by $B_k$ the set of $\alpha\in\Fqh$ where the third condition fails for a fixed $k\in\Fq$, $k\neq 0,1$. 
		We count the size of these sets using Lemma \ref{f_counting_lemma}:
		\[\lvert B_0\rvert=\left\lvert\left\{\alpha\in\Fqh :  f\left(\frac{1}{\alpha}\right)=0\right\}\right\rvert=\left\lvert\left\{\beta : \beta\in\Fqh,  f\left(\beta\right)=0\right\}\setminus\{0\}\right\rvert
		=
		q^{h-1}-1.\]
		\[\lvert B_k\rvert=\left\lvert\left\{\alpha\in\Fqh\ |\ f\left(\frac{k-1}{k(k-1)\alpha-k^2}\right)=1\right\}\right\rvert=\left\lvert\left\{\beta : \beta\in\Fqh,  f\left(\beta\right)=1\right\}\right\rvert
		=q^{h-1},\] 
		for any $k\in\Fq$, $k\not=0,1$. This is because the $\Fqh\setminus\{k/(k-1)\}\rightarrow \Fqh\setminus\{0\}$ function $\alpha\mapsto\frac{k-1}{k(k-1)\alpha-k^2}$ is a bijection for each $k\in \F_q \setminus \{0,1\}$. 
		
		Therefore, \begin{multline*}
			\left\lvert\left\{\alpha\in\Fqh\setminus\Fq : f\left(\frac{1}{\alpha}\right)\not=0,\, f\left(\frac{k-1}{k(k-1)\alpha-k^2}\right)\not=1\ \ \forall k\in\Fq\setminus\{0,1\}\right\}\right\rvert=\\=\left\lvert\Fqh\setminus\left(\Fq\cup\bigcup_{k\not=1}B_k\right)\right\rvert\geq q^h-q-(q-1)q^{h-1}+1=q^{h-1}-q+1>0.
		\end{multline*}
		
		We have shown that there always exists an $\alpha$ such that the second and third conditions of Theorem \ref{2fold_construction} are met. It remains to find an $\F_q$-linear $g \colon \Fqh\rightarrow\Fq$ such that $g(1)=g(\alpha)=1$. A $g$ like this exists, because to obtain an $\F_{q^h} \rightarrow \F_q$ linear functional, the values  can be  freely chosen on an $\F_q$-linearly independent set, and $\{1,\alpha\}$ is such a set because $\alpha\not\in\Fq$. If $g(1)=g(\alpha)=1$, then any such $g$ is obviously nonzero.
	\end{proof}
	
	\begin{Cor}\label{2fold_main}
		For all prime powers $q$, there exists a $2$-fold blocking set of size $2(q^h+q^{h-1}+1)$ in $\FGq$, $h\geq 2$.
	\end{Cor}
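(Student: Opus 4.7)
The plan is to assemble the preceding Lemma \ref{uvw}, Theorem \ref{2fold_construction}, and Proposition \ref{2fold_existence} into a direct existence proof: the desired double blocking set will simply be the disjoint union $\cL \cup \cL'$ constructed in the previous subsection.

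First, I would fix any nonzero $\F_q$-linear functional $f\colon \Fqh \to \Fq$ (for concreteness, $f = \Tr_{q^h/q}$). Applying Proposition \ref{2fold_existence} to this $f$, I obtain an element $\alpha \in \Fqh \setminus \Fq$ and a nonzero $\Fq$-linear functional $g\colon \Fqh \to \Fq$ for which the three conditions of Theorem \ref{2fold_construction} hold simultaneously.

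With $f$, $g$, $\alpha$ in hand I would write down the two point sets
\[
\cL = \{(x:f(x):y) : x\in \Fqh,\, y\in\Fq,\, (x,y)\neq(0,0)\},
\]
\[
\cL' = \{(y':x':g(x')+y'\alpha) : x'\in \Fqh,\, y'\in\Fq,\, (x',y')\neq(0,0)\}
\]
and invoke Lemma \ref{uvw} twice to see that each of them is a blocking set of size $q^h+q^{h-1}+1$: for $\cL$ use $\vec u=(1,0,0)$, $\vec v=(0,1,0)$, $\vec w=(0,0,1)$, and for $\cL'$ use $\vec u=(0,1,0)$, $\vec v=(0,0,1)$, $\vec w=(1,0,\alpha)$, whose linear independence over $\Fqh$ follows from $\alpha \notin \Fq$ (in particular $\alpha \neq 0$). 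Theorem \ref{2fold_construction} then guarantees $\cL \cap \cL' = \emptyset$, so $\cL \cup \cL'$ is a point set of cardinality $2(q^h+q^{h-1}+1)$ that meets every line of $\FGq$ in at least two points.

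Since the substantive work has been carried out in the preceding results, this corollary has no real obstacle to overcome; the argument is essentially a bookkeeping step confirming that the vectors plugged into Lemma \ref{uvw} are linearly independent and that the parameters delivered by Proposition \ref{2fold_existence} are exactly those required by Theorem \ref{2fold_construction}.
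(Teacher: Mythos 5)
Your proposal is correct and matches the paper's intended argument exactly: the corollary is an immediate assembly of Lemma \ref{uvw}, Theorem \ref{2fold_construction}, and Proposition \ref{2fold_existence}, which is why the paper states it without a separate proof. (The only superfluous remark is that the independence of $(0,1,0)$, $(0,0,1)$, $(1,0,\alpha)$ holds for any $\alpha$, not just $\alpha\notin\Fq$; this does not affect correctness.)
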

	
	\section{\texorpdfstring{$3$-fold}{3-fold} blocking sets}
	\label{sec:3fold}

	\subsection{Preliminaries to the main construction}
	
	Similarly to what we did in the case of $2$-fold blocking sets, we now construct three disjoint blocking sets of size $q^h+q^{h-1}+1$ in $\FGq$, where $h\geq2$. We will not concern ourselves with the case of $h=2$, since in that case one can just pick three disjoint Baer subplanes from a partition of the point set of $\PG(2,q^2)$ into Baer subplanes, see e.g. \cite{BSP}.
	
	\medskip
	
	Let us consider a blocking set of the form
	\[
	\cL:=\{(x:f(x)+y\alpha:y\beta) : x\in\Fqh,y\in\Fq, (x,y)\neq (0,0)\},
	\]
	where $f:\Fqh\rightarrow\Fq$ is a nonzero  $\Fq$-linear functional, $\alpha\in\Fqh\setminus\Fq$, $\beta\in\Fqh\setminus\{0\}$.
	The point set $\cL$ is a blocking set of size $q^h+q^{h-1}+1$ by Lemma \ref{uvw} with $\vec u = (1,0,0)$, $\vec v=(0,1,0)$, and $\vec w=(0,\alpha,\beta)$.

	Next, we take the collineation \[\varphi:\FGq\rightarrow\FGq,\,(x:y:z)\mapsto(z:x:y).\]
	Note that $\varphi$ has order $3$. Since $\varphi$ is a collineation, $\varphi(\cL)$ and $\varphi^2(\cL)$ are also blocking sets of size $q^h+q^{h-1}+1$. Our aim is to choose $\cL$ such that $\cL$, $\varphi(\cL)$ and $\varphi^2(\cL)$ are pairwise disjoint. Notice that if $\cL\cap\varphi(\cL)=\emptyset$, then applying $\varphi$ yields $\varphi(\cL)\cap\varphi^2(\cL)=\varphi(\emptyset)=\emptyset$, and applying $\varphi$ again, we get $\varphi^2(\cL)\cap\varphi^3(\cL)=\varphi^2(\cL)\cap \cL=\emptyset$. This means that it suffices to check that $\cL\cap\varphi(\cL)=\emptyset$  in order to obtain a $3$-fold blocking set.
	
	\medskip
	
	Similarly to Theorem \ref{2fold_construction}, we set conditions on $f$, $\alpha$ and $\beta$ such that if they hold, then $\cL$ and $\varphi(\cL)$ are disjoint:
	
	\begin{Th}\label{3fold_construction}
		The blocking sets $\cL$ and $\varphi(\cL)$ (defined as above) are disjoint if the following conditions hold:\begin{enumerate}[\rm(1)]
			\item $f\left(\frac{\alpha}{\beta}\right)\not=1$,
			\item $k\not=f\left(\frac{\alpha^2}{\beta}\right)+\left(f\left(\frac{\beta^2}{\alpha+k}\right)+k\right)f(\frac\alpha\beta) + kf\left(\frac{\beta^2}{\alpha+k}\right)f\left(\frac1\beta\right)$ for all $k\in \mathbb F_q$.
		\end{enumerate}
	\end{Th}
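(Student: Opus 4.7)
The plan is to assume for contradiction that a point $P$ lies in both $\cL$ and $\varphi(\cL)$, so that
\[
P=(x:f(x)+y\alpha:y\beta)=(y'\beta:x':f(x')+y'\alpha)
\]
for some $x,x'\in\Fqh$ and $y,y'\in\Fq$. Using $\Fq$-linearity of $f$, one may rescale each representative by $1/y$ (resp.\ $1/y'$) when $y\neq 0$ (resp.\ $y'\neq 0$) and thus assume $y,y'\in\{0,1\}$. This leaves three subcases to exclude: $y=0$, $y'=0$ (with $y=1$), and $y=y'=1$.

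The two boundary cases are handled directly using $\alpha\notin\Fq$. If $y=0$, then $P=(x:f(x):0)$ has third coordinate zero, so $f(x')+y'\alpha=0$; since $f(x')\in\Fq$ and $\alpha\notin\Fq$, this forces $y'=0$ and $f(x')=0$, collapsing the right-hand side to $(0:1:0)$ and hence $x=0$, a contradiction with $(x,y)\neq(0,0)$. If $y=1$ and $y'=0$, the analogous argument on first coordinates yields $x=0$ and $P=(0:\alpha:\beta)=(0:x':f(x'))$; unwinding homogeneity produces $\lambda\in\Fqh^{*}$ with $x'=\lambda\alpha$ and $f(x')=\lambda\beta$, and since $f(x')\in\Fq$ this forces $\lambda=c/\beta$ for some $c\in\Fq^{*}$. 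The identity $f(c\alpha/\beta)=c$ then collapses by $\Fq$-linearity to $f(\alpha/\beta)=1$, contradicting condition (1).

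The main case is $y=y'=1$. Here $P=(x:f(x)+\alpha:\beta)=(\beta:x':f(x')+\alpha)$ produces a common scalar $\mu\in\Fqh^{*}$ with $x=\mu\beta$, $f(x)+\alpha=\mu x'$ and $\beta=\mu(f(x')+\alpha)$. Eliminating $\mu$ between the first and third equations gives $x=\beta^{2}/(f(x')+\alpha)$, which is well defined because $\alpha+k\neq 0$ for every $k\in\Fq$. Writing $k:=f(x')$ and $m:=f(x)=f(\beta^{2}/(\alpha+k))$, the middle equation yields $x'=(m+\alpha)(k+\alpha)/\beta$. Expanding $(m+\alpha)(k+\alpha)=mk+(m+k)\alpha+\alpha^{2}$, applying $f$, and using $\Fq$-linearity to pull the scalars $m,k$ outside, the consistency relation $k=f(x')$ becomes
\[
k=f(\alpha^{2}/\beta)+(m+k)f(\alpha/\beta)+mk\,f(1/\beta),
\]
which upon substituting $m=f(\beta^{2}/(\alpha+k))$ is exactly the equation excluded by condition (2).

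The principal obstacle is the bookkeeping in this last case: $f$ is only $\Fq$-linear (not $\Fqh$-linear), so the scalars $m,k\in\Fq$ must be identified and pulled out of $f$ before the $\Fqh$-valued terms $\alpha^{2}/\beta$, $\alpha/\beta$ and $1/\beta$ are separated. Once that bookkeeping is done, condition (2) matches the resulting identity term by term, and the theorem follows.
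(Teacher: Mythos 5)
Your proposal is correct and follows essentially the same route as the paper: reduce to $y,y'\in\{0,1\}$ via $\Fq$-linearity, dispose of the two boundary cases using $\alpha\notin\Fq$ (matching condition (1)), and in the case $y=y'=1$ eliminate the scalar to get $x=\beta^2/(\alpha+k)$ and $x'=(l+\alpha)(k+\alpha)/\beta$, whose image under $f$ is exactly the relation excluded by condition (2). The only difference is cosmetic (you organize the case split by $y$ first rather than $y'$), so there is nothing to add.
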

	\begin{proof}
		The proof is analogous to that of Theorem \ref{2fold_construction}. We take $P\in \cL$ and $P'\in \varphi(\cL)$, meaning 
		\[P=(x:f(x)+y\alpha:y\beta), \quad P'=(y'\beta:x':f(x')+y'\alpha),\] for some $x,x'\in\Fqh$, $y,y'\in\Fq$. We want to show $P\not=P'$ by indirectly assuming $P=P'$. We will find contradictions case by case based on whether $y$ and $y'$ are $0$ or $1$.
		\begin{enumerate}
			\item $y'=0$, meaning \[
			(x:f(x)+y\alpha:y\beta)=(0:x':f(x')).
			\]
			The first coordinate has to be zero, meaning $x=0$. But then $y\not=0$ (because $(0:0:0)$ is not a point), so we can take $y=1$, and we get $(0:x':f(x'))=(0:\alpha:\beta)$, or  equivalently, $\left(0: \frac{x'}{f(x')}:1\right)=(0:\frac\alpha\beta:1)$.
			It follows that 
			$f\left(\frac{x'}{f(x')}\right)=f\left(\frac{\alpha}{\beta}\right)$. Now, the fact that $f$ is an $\Fq$-linear functional implies $f\left(\frac\alpha\beta\right)=1$, which contradicts the first condition.
			\item $y=0$ and $y'=1$, meaning\[
			(x:f(x):0)=(\beta:x':f(x')+\alpha).
			\]
			This is impossible, since $f(x')\in\Fq$, $\alpha\not\in\Fq$, and hence $f(x')+\alpha\neq 0$.
			\item $y=y'=1$, meaning \[
			(x:f(x)+\alpha:\beta)=(\beta:x':f(x')+\alpha).
			\]
			We introduce $k=f(x')$ and $l=f(x)$ (note that both $k$ and $l$ are in $\Fq$).  
			After cross-multiplying the third coordinates by the first and second coordinates, respectively, and solving the equations for $x$ and $x'$, we can see that $P=P'$ implies
			\[
			x=\frac{\beta^2}{k+\alpha}\text{, and } x'=\frac{(k+\alpha)(l+\alpha)}{\beta}=kl\frac1\beta+(l+k)\frac\alpha\beta+\frac{\alpha^2}\beta.\]  
			Applying $f$ to both equations, we obtain:
			\[
			l=f\left(\frac{\beta^2}{k+\alpha}\right)\text{, and }k=klf\left(\frac{1}{\beta}\right)+(l+k)f\left(\frac{\alpha}{\beta}\right)+f\left(\frac{\alpha^2}{\beta}\right).
			\] Substituting $l=f\left(\frac{\beta^2}{\alpha+k}\right)$ into the second equation yields:\[
			k=f\left(\frac{\alpha^2}{\beta}\right)+\left(f\left(\frac{\beta^2}{\alpha+k}\right)+k\right)f\left(\frac\alpha\beta\right) + kf\left(\frac{\beta^2}{\alpha+k}\right)f\left(\frac1\beta\right).
			\]
			Since $k\in\Fq$, this contradicts the second condition of the statement.
		\end{enumerate} 
	\end{proof}
	
	\begin{Rem}
		If such a $3$-fold blocking set exists, then of course there exists a $2$-fold blocking set as well, by picking two of the disjoint blocking sets.
	\end{Rem}
	
	The conditions of this theorem are too complicated for a simple counting argument (as in Proposition \ref{2fold_existence}) to work. In the next two subsections, we show the existence of $\alpha,\beta$ and $f$ for which the conditions of Theorem \ref{3fold_construction} hold with $h\geq 3$.
	
	\subsection{The case of \texorpdfstring{$h\geq4$}{h>=4}.} We will use Theorem \ref{3fold_construction} to show that a $3$-fold blocking set of size $3(q^h+q^{h-1}+1)$ exists in $\FGq$ for $h\geq 4$. Therefore, we want to find an $\alpha\in\Fqh\setminus\Fq$, $\beta\in\Fqh\setminus\{0\}$ and an $f \colon \Fqh\rightarrow\Fq$ nonzero linear functional satisfying the conditions of the theorem.
	
	\medskip
	
	Take any $\alpha\in\Fqh$ such that $\Fqh=\Fq(\alpha)$, and let 
	\[m(x)=x^h+\lambda_{h-1}x^{h-1}+\dots+\lambda_1x+\lambda_0\] be its minimal polynomial over $\Fq$. Put $\beta:=1$ and define $f$ on the basis $1,\alpha,\dots,\alpha^{h-1}$ by:\begin{equation}\label{hgeq4_f_def}
		f\left(\alpha^j\right):=
		\begin{cases}
			1\quad \text{if} \quad j=0\\
			s\quad \text{if} \quad j=2\\
			t\quad \text{if} \quad j=h-1\\
			0\quad \text{otherwise}
		\end{cases},
	\end{equation} where $s,t\in\Fq$ will be chosen later. In fact, we aim to show that there exist at least $q-1$ choices for $(s,t)$ such that the conditions of Theorem \ref{3fold_construction} hold. 
	For this, we need to calculate $\frac1{\alpha+k}$ as the linear combination of the $\F_q$-basis $1,\alpha,\dots,\alpha^{h-1}$ of $\F_{q^h}$. This will be summarized in the following lemma.
	
	\begin{Lemma}\label{frac_calc}
		Let $\alpha\not\in\Fq$, $k\in\Fq$, and $p\in\Fq[x]$ be a monic polynomial of degree $h>1$ for which $p(\alpha)=0$. Then there exist coefficients $a_{1,k},\dots,a_{h-1,k} \in \F_q$ such that 
		\[
		(\alpha-k)\left(\alpha^{h-1}+a_{1,k}\alpha^{h-2}+\dots+a_{h-2,k}\alpha+a_{h-1,k}\right)=-p(k).
		\] 
		In particular, if $m$ is the minimal polynomial of $\alpha$, and $k\in\Fq$, then there exist unique coefficients $c_{1,k},\dots,c_{h-1,k} \in \F_q$ such that 
		\[
		\frac1{\alpha+k}=-\frac1{m(-k)}\left(\alpha^{h-1}+c_{1,k}\alpha^{h-2}+\dots+c_{h-2,k}\alpha+c_{h-1,k}\right).
		\]
	\end{Lemma}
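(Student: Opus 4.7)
The plan is to derive both parts of the lemma from a single polynomial-division identity. First I would observe that since $p(x) \in \Fq[x]$ is monic of degree $h$, the polynomial $p(x) - p(k) \in \Fq[x]$ has $k$ as a root, so dividing by $(x-k)$ gives
\[
p(x) - p(k) \;=\; (x-k)\bigl(x^{h-1} + a_{1,k}x^{h-2} + \dots + a_{h-2,k}x + a_{h-1,k}\bigr)
\]
for some uniquely determined $a_{i,k}\in\Fq$; the leading coefficient of the quotient is $1$ because $p$ is monic. Evaluating at $x = \alpha$ and using $p(\alpha)=0$ yields
\[
-p(k) \;=\; (\alpha-k)\bigl(\alpha^{h-1} + a_{1,k}\alpha^{h-2} + \dots + a_{h-2,k}\alpha + a_{h-1,k}\bigr),
\]
which is the first assertion.

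For the second part, I would apply the identity just proved to $p = m$, the minimal polynomial of $\alpha$, and with $-k$ in place of $k$. Writing the resulting coefficients as $c_{i,k}$ gives
\[
-m(-k) \;=\; (\alpha+k)\bigl(\alpha^{h-1} + c_{1,k}\alpha^{h-2} + \dots + c_{h-2,k}\alpha + c_{h-1,k}\bigr).
\]
Since $\alpha \notin \Fq$ while $-k \in \Fq$, $\alpha$ is not a root of $x+k$, and the roots of $m$ are exactly the Galois conjugates of $\alpha$, none of which lie in $\Fq$; in particular $m(-k) \neq 0$. Hence we may divide by $-m(-k)$ to obtain the claimed expression for $1/(\alpha+k)$. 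Uniqueness of the $c_{i,k}$ is immediate from the fact that $\{1,\alpha,\dots,\alpha^{h-1}\}$ is an $\Fq$-basis of $\Fq(\alpha)=\Fqh$.

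Since both steps are essentially routine polynomial division, I do not anticipate any serious obstacle; the only point that needs a moment's care is the nonvanishing of $m(-k)$, which must be verified before the division in the second part can be carried out.
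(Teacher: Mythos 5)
Your proof is correct and takes essentially the same route as the paper's: both factor $p(x)-p(k)$ as $(x-k)$ times a monic quotient, evaluate at $\alpha$, and then specialize to $p=m$ with $k\mapsto -k$, dividing by $-(\alpha+k)\,m(-k)$ after noting this is nonzero because $\alpha\notin\Fq$. Your explicit justification of $m(-k)\neq 0$ via the conjugate roots of $m$ and of uniqueness via the $\Fq$-independence of $1,\alpha,\dots,\alpha^{h-1}$ matches the paper's argument.
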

	\begin{proof}
		Consider the polynomial $r_k(x)=p(x)-p(k)$. Since $r_k(k)=0$, and $r_k$ is a monic polynomial, there exists a monic polynomial $w_r(x)=x^{h-1}+a_{1,k}x^{h-2}+\dots+a_{h-1,k}$ of degree $h-1$ such that $r_k(x)=(x-k)w_k(x)$. Substituting $\alpha$ and using $p(\alpha)=0$, we obtain our first equation.
		
		For the second equation, we apply the first one with $p=m$. We need $\alpha+k\neq 0$ and $m(-k)\not=0$ for all $k\in\Fq$, which holds since $\alpha\not\in\Fq$. Substituting $k\mapsto -k$ into the first equation and dividing both sides by $-(\alpha+k)\cdot m(-k)$, we get our second equation (with $c_{i,k}:=a_{i,-k}$). The coefficients $c_{i,k}$ are unique since $1,\alpha,\dots,\alpha^{h-1}$ are $\F_q$-linearly independent.
	\end{proof}
	
	%\begin{Rem}
	%   Let $p(x)=x^h+b_{1}x^{h-1}+\dots+b_h$. The coefficients $a_{i,k}$ defined as in Lemma \ref{frac_calc} for a fixed $k\in\Fq$ satisfy the following recursion: \begin{gather*}
		%      a_{1,k}=k+b_1\\
		%     a_{i,k}=ka_{i-1,k}+b_i\quad 1\leq i\leq h-1,
		%\end{gather*} but this won't be necessary for us.
		%\end{Rem}
		
		\begin{Prop}
			Let $\alpha\in\Fqh\setminus\Fq$ with $\Fqh=\Fq(\alpha)$, $\beta:=1$ and define $f$ as in \ref{hgeq4_f_def} for some $s,t\in\Fq$. Then, there exist at least $q-1$ pairs $(s,t)$ such that the conditions of Theorem \ref{3fold_construction} hold for $(\alpha,1,f)$.
		\end{Prop}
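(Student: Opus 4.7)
The plan is to substitute the specific definition of $f$ and the value $\beta=1$ into the two conditions of Theorem \ref{3fold_construction}, reducing everything to a family of affine constraints on $(s,t)\in\mathbb{F}_q^{2}$. From \eqref{hgeq4_f_def} we read off $f(1)=1$, $f(\alpha)=0$, $f(\alpha^{2})=s$ and $f(\alpha^{h-1})=t$, so condition $(1)$ becomes $0\neq 1$ and holds automatically, while condition $(2)$ (with $\beta=1$) collapses to
\[
k\;\neq\;s+k\cdot f\!\left(\tfrac{1}{\alpha+k}\right)\qquad\text{for every }k\in\mathbb{F}_q.
\]

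The second step is to evaluate $f(1/(\alpha+k))$ by feeding the expansion from Lemma \ref{frac_calc} through $f$. Of the $h$ basis terms $\alpha^{h-1},\alpha^{h-2},\dots,\alpha,1$, only the three powers $\alpha^{h-1}$, $\alpha^{2}$ and $1$ contribute. Since $h\geq 4$, these three exponents are pairwise distinct; the coefficient of $\alpha^{2}$ in the bracket is $c_{h-3,k}$ and that of $1$ is $c_{h-1,k}$, so
\[
f\!\left(\tfrac{1}{\alpha+k}\right)=-\frac{t+c_{h-3,k}\,s+c_{h-1,k}}{m(-k)}.
\]
Substituting this back and clearing the nonzero denominator $m(-k)$, the condition to be verified for every $k\in\mathbb{F}_q$ rewrites as
\[
k\,t+s\bigl(k\,c_{h-3,k}-m(-k)\bigr)\;\neq\;-k\bigl(m(-k)+c_{h-1,k}\bigr).
\]

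The third step is a short pigeonhole count. For $k=0$ the inequality reduces to $s\,m(0)\neq 0$; since $\alpha\notin\mathbb{F}_q$ forces $m(0)\neq 0$, the $k=0$ constraint is simply $s\neq 0$. For each of the $q-1$ values $k\in\mathbb{F}_q^{*}$, the inequality is affine in $t$ with nonzero leading coefficient $k$, so for any fixed $s$ it rules out exactly one value of $t\in\mathbb{F}_q$. Therefore, fixing any $s\in\mathbb{F}_q^{*}$, at most $q-1$ values of $t$ are forbidden, leaving at least one admissible $t$; letting $s$ range over the $q-1$ nonzero elements of $\mathbb{F}_q$ produces $q-1$ distinct admissible pairs $(s,t)$, as required.

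I expect the main obstacle to be the bookkeeping in the second step: one must track exactly which $c_{i,k}$ multiplies $s$, which multiplies $t$, and which contributes to the pure constant inside $f(1/(\alpha+k))$. The hypothesis $h\geq 4$ is used precisely here to keep the three surviving exponents $h-1$, $2$ and $0$ in pairwise distinct positions of the expansion, so that no collision occurs among the $s$-, $t$- and constant-contributions. The case $h=3$ (where $\alpha^{h-1}=\alpha^{2}$ collides) is not covered by this argument and must be treated separately, presumably in the following subsection.
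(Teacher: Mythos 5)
Your proposal is correct and follows essentially the same route as the paper: substitute $\beta=1$ and the definition of $f$, use Lemma \ref{frac_calc} to express $f\left(\tfrac{1}{\alpha+k}\right)$ via $c_{h-3,k}$, $c_{h-1,k}$ and the leading coefficient $1$, observe that $k=0$ forces $s\neq 0$, and note that for each $s\neq 0$ and $k\neq 0$ the failure condition is affine in $t$ with exactly one bad value, yielding at least $q-1$ admissible pairs. The only cosmetic difference is the final count (per-$s$ pigeonhole versus subtracting $(q-1)^2$ bad pairs from $q^2-q$), which gives the same bound.
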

		\begin{proof}
			Substituting $\beta=1$ into the first condition of Theorem \ref{3fold_construction}, we obtain $f(\alpha)\neq 1$, which is true, because we set $f(\alpha)=0$.
			
			Substituting into the second condition of Theorem \ref{3fold_construction}, we obtain
			\begin{equation}
				\label{fin}
				k\not=f(\alpha^2)+kf\left(\frac{1}{\alpha+k}\right)
			\end{equation}
			for all $k\in\Fq$. For $k=0$, this condition is $f(\alpha^2)\not=0$, meaning $s\not=0$. We now consider all $q^2-q$ pairs $(s,t)\in\Fq\setminus\{0\}\times\Fq$, and count how many pairs fail to satisfy the second condition for some $k\in\Fq\setminus\{0\}$.
			By Lemma \ref{frac_calc}, if $m$ is the minimal polynomial of $\alpha$, then
			\[
			\frac1{\alpha+k}=-\frac1{m(-k)}\left(\alpha^{h-1}+c_{1,k}\alpha^{h-2}+\dots+c_{h-2,k}\alpha+c_{h-1,k}\right)
			\] for some unique $c_{i,k}\in\Fq$, meaning 
			\[f\left(\frac1{\alpha+k}\right)=\frac{c_{h-1,k}+c_{h-3,k}s+t}{-m(-k)},
			\] from our definition of $f$. Substituting into \eqref{fin} and multiplying by $\frac{m(-k)}k\neq 0$, we obtain that a pair $(s,t)$ fails the second condition for a given $k\in\Fq\setminus\{0\}$ if only if \[
			-t-c_{h-3,k}s-c_{h-1,k}+\frac{m(-k)}ks=m(-k)\Longleftrightarrow t=-m(-k)-c_{h-1,k}-c_{h-3,k}s+\frac{m(-k)}ks,
			\]
			which is a linear equation in $t$, therefore for every $s,k\in\Fq\setminus\{0\}$, there is exactly one $t$ for which $(s,t)$ fails to satisfy the second condition. 
			
			It follows that there are at least $(q^2-q)-(q-1)^2=q-1$ pairs $(s,t)$ that satisfy both conditions of Theorem \ref{3fold_construction}.
		\end{proof}

		\subsection{The case of \texorpdfstring{$h=3$}{h=3}}
		
		In this case, we use a slightly different construction, because when defining $f$ we have only three degrees of freedom instead of at least four.
		
		Again, put $\beta:=1$. We will choose an appropriate $\alpha\in\F_{q^3}\setminus\Fq$ and show the existence of an $f$ such that the triple $(\alpha,1,f)$ satisfies the conditions of Theorem \ref{3fold_construction}. As it turns out, the following lemma will be very useful for our construction:\begin{Lemma}\label{lambda}
			The number of triples $(\lambda_0,\lambda_1,\lambda_2)\in\Fq^3$, where the polynomial $p(x)=x^3-\lambda_2x^2-\lambda_1x-\lambda_0$ is irreducible over $\Fq$, and the polynomial $r(x)=x^3+\lambda_2x^2-(\lambda_1-1)x+\lambda_0$ is reducible over $\Fq$, is at least \[
			\frac{q(q-1)^2}{9}-\frac{2(q-1)}3.
			\]
		\end{Lemma}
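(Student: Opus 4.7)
My goal is to count triples $(\lambda_0, \lambda_1, \lambda_2) \in \Fq^3$ with $p$ irreducible and $r$ reducible over $\Fq$.  A direct calculation gives $r(-x) = -(p(x) + x)$, so $r$ has a root in $\Fq$ if and only if $p(x) + x$ does; hence $r$ is reducible iff some $k \in \Fq$ satisfies $p(k) = -k$.  The case $k = 0$ forces $\lambda_0 = 0$ and thereby makes $p$ reducible, so only $k \in \Fq^*$ can contribute.

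For $k \in \Fq^*$ let $A_k$ denote the number of triples with $p$ irreducible and $p(k) = -k$; the last equation determines $\lambda_0$ from $(k, \lambda_1, \lambda_2)$.  By double counting,
\[
\sum_{k \in \Fq^*} A_k \;=\; \lvert\{(p, k) : p \text{ monic irreducible cubic over } \Fq,\ k \in \Fq^*,\ p(k) = -k\}\rvert.
\]
The central step is to evaluate this sum using the norm $\N := \N_{\F_{q^3}/\Fq}$.  Each monic irreducible cubic factors as $p(x) = \prod_{i=0}^{2}(x - \alpha^{q^i})$ for some $\alpha \in \F_{q^3} \setminus \Fq$, and a short expansion gives $p(k) = -\N(\alpha - k)$ for every $k \in \Fq$.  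Thus the condition $p(k) = -k$ becomes $\N(\alpha - k) = k$.  Setting $\beta := \alpha - k$ trivialises this: for every $\beta \in \F_{q^3} \setminus \Fq$ the value $k := \N(\beta)$ lies in $\Fq^*$ and $\alpha := \beta + k$ lies in $\F_{q^3} \setminus \Fq$, and conversely every admissible $(\alpha, k)$ arises uniquely from $\beta = \alpha - k$.  Hence there are exactly $q^3 - q$ pairs $(\alpha, k)$; since each irreducible $p$ has three roots in $\F_{q^3} \setminus \Fq$, the number of pairs $(p, k)$ is $(q^3-q)/3$, so $\sum_{k \in \Fq^*} A_k = (q^3-q)/3$.

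Each irreducible $p$ with $r$ reducible is counted in this sum once per root of $r$ in $\Fq$, hence at most three times.  It follows that the count targeted by the lemma is at least $\frac{1}{3} \cdot \frac{q^3-q}{3} = \frac{q(q-1)(q+1)}{9}$, and direct algebra gives
\[
\frac{q(q-1)(q+1)}{9} - \left(\frac{q(q-1)^2}{9} - \frac{2(q-1)}{3}\right) = \frac{2(q-1)(q+3)}{9} \geq 0,
\]
completing the proof.  The main obstacle is spotting the norm identity $p(k) = -\N(\alpha - k)$ and the ensuing substitution $\beta = \alpha - k$, which converts an apparently delicate counting problem into the essentially trivial bijection $\beta \mapsto \beta + \N(\beta)$; once this is in place, the rest is elementary double counting.
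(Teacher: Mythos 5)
Your proof is correct, and it takes a genuinely different route from the paper's. The paper fixes a root $k\in\Fq^*$ of $r$, notes that $r(k)=0$ is one linear condition on $(\lambda_0,\lambda_1,\lambda_2)$ (hence $q^2$ triples), and then \emph{upper-bounds} the number of those triples for which $p$ is reducible by a Vieta-based case analysis on the factorization type of $p$ (triple root, double root, three distinct roots, one root plus an irreducible quadratic); this yields at least $q(q-1)/3-2$ good triples per $k$, and summing over $k$ and dividing by $3$ gives the stated bound. You instead compute the relevant sum \emph{exactly}: via $r(-x)=-(p(x)+x)$ you translate reducibility of $r$ into $p(k)=-k$ for some $k\in\Fq^*$, then use $p(k)=-\N_{\F_{q^3}/\Fq}(\alpha-k)$ for a root $\alpha$ of $p$ and the bijection $\beta\mapsto(\beta+\N(\beta),\N(\beta))$ on $\F_{q^3}\setminus\Fq$ to show there are exactly $(q^3-q)/3$ pairs $(p,k)$ with $p$ irreducible and $p(k)=-k$. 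Dividing by $3$ (each $p$ contributing once per root of $r$ in $\Fq$) gives the lower bound $\tfrac{q(q-1)(q+1)}{9}$, which is strictly stronger than the lemma's $\tfrac{q(q-1)^2}{9}-\tfrac{2(q-1)}{3}$; your final comparison $\tfrac{2(q-1)(q+3)}{9}\ge 0$ checks out. What your approach buys is an exact intermediate count, no case analysis, and a cleaner constant; what the paper's approach buys is that it stays entirely inside elementary polynomial/Vieta manipulations over $\Fq$ without invoking the norm map or the splitting field, and its per-$k$ count localizes the information (which is what the subsequent proposition actually uses when it picks a specific $k$ with $b_k=-1/k$). All the delicate points in your argument (that $k=0$ is excluded, that $\N(\beta)\ne 0$ for $\beta\notin\Fq$, that the correspondence $(\alpha,k)\mapsto(p,k)$ is exactly $3$-to-$1$, and that an irreducible $p$ forces $r(0)=\lambda_0\ne 0$ so all roots of $r$ lie in $\Fq^*$) are handled or immediate.
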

		\begin{proof}
			Fix $r(x)=x^3+\lambda_2x^2-(\lambda_1-1)x+\lambda_0$, and assume that it is reducible. Since it has degree $3$, it follows that $r$ has a root in $\Fq$. Let us fix $k\in \F_q$ as a root. If $k=0$, then $\lambda_0=0$, so the polynomial $p(x)=x^3-\lambda_2x^2-\lambda_1x-\lambda_0$ is also not irreducible, so we will assume $k\not=0$. 
			If $r(k)=0$, then 
			\begin{equation}
				\label{r(k)=0}
				k^2\lambda_2-k\lambda_1+\lambda_0=-k^3-k,
			\end{equation}
			so there are exactly $q^2$ choices for $(\lambda_0,\lambda_1,\lambda_2)$ such that $r(k)=0$. We will give a lower bound on the number of triples $(\lambda_0,\lambda_1,\lambda_2)$ for which $p(x)$ is irreducible and $r(k)=0$. This will be achieved by giving an upper bound on all remaining cases, namely those for which $r(k)=0$ and $p(x)$ is reducible. We will make use of Vieta's formulas: If $p(x)=x^3-\lambda_2x^2-\lambda_1x-\lambda_0=(x-r_1)(x-r_2)(x-r_3)$, then $\lambda_2=r_1+r_2+r_3$, $-\lambda_1=r_1r_2+r_2r_3+r_1r_3$, $\lambda_0=r_1r_2r_3$. 
			\begin{enumerate}
				\item $p(x)$ has one root of multiplicity $3$, meaning \[
				p(x)=(x-r_1)^3\text{, }r_1\in\Fq.
				\]After substituting Vieta's formulas into \eqref{r(k)=0}, we obtain $r_1^3-3kr_1^2+3r_1k^2=-k^3-k$. This is a cubic equation in $r_1$, meaning there are at most $3$ solutions, so at most $3$ triples $(\lambda_0,\lambda_1,\lambda_2)$ for which $r(k)=0$ and $p(x)$ has a $3$-fold root in $\F_q$.
				
				\item $p(x)$ has two distinct roots, one with multiplicity $2$, meaning \[
				p(x)=(x-r_1)(x-r_2)^2\text{, }r_1,r_2\in\Fq,\, r_1\neq r_2. 
				\]
				Similarly, substitution into \eqref{r(k)=0} gives $r_1(k^2+2kr_2+r_2^2)=-k^3-k-2k^2r_2-kr_2^2$, or equivalently,  $r_1(k+r_2)^2=-k(k+r_2)^2-k$. If $r_2=-k$, we get $k=0$, a contradiction. If $r_2\not=-k$, we have a linear equation in $r_1$, so for each $r_2\in\Fq\setminus\{-k\}$, we have exactly one solution for $r_1$. This means that there are at most $(q-1)$ triples $(\lambda_0,\lambda_1,\lambda_2)$, one for every $r_2\in\Fq\setminus\{-k\}$,  for which $r(k)=0$ and $p(x)$ has two distinct roots, one with multiplicity $2$. 
				\item $p(x)$ has three distinct roots, meaning \[
				p(x)=(x-r_1)(x-r_2)(x-r_3)\text{, }r_1,r_2,r_3\in\Fq \text{ are pairwise distinct.}
				\]
				Similarly to the previous case, substitution into \eqref{r(k)=0} gives us $r_3(k^2+kr_1+kr_2+r_1r_2)=-k(k^2+kr_1+kr_2+r_1r_2)-k$. If $k^2+kr_1+kr_2+r_1r_2=0$, we again get $k=0$, a contradiction. If $k^2+kr_1+kr_2+r_1r_2\not=0$, then we get a linear equation in $r_3$, thus, for any pair $r_1,r_2$ there is at most one $r_3$ such that this case holds. Since $r_1$ and $r_2$ are distinct, and their roles are symmetric, we can choose them in $q\choose2$ ways. In these $q\choose2$ cases, we counted every $\{r_1,r_2,r_3\}$ unordered triple 3 times, because we fixed $r_3$ to be the third, meaning, for this case there are at most $\frac13{q\choose2}=\frac{q^2-q}{6}$ triples $(\lambda_0,\lambda_1,\lambda_2)$ for which $r(k)=0$ and $p(x)$ has three pairwise distinct roots. 
				\item $p(x)$ has one root in $\Fq$ and two conjugate roots in $\F_{q^2}$, meaning \[
				p(x)=(x-r_1)(x-r_2)(x-r_2^q), \, r_1\in\Fq,\, r_2\in\F_{q^2}\setminus\Fq.
				\]
				As in the previous case, we can see that for any choice of $r_2$ (which determines $r_2^q$) there is at most 1 suitable choice for $r_1$. Since $r_2\in\F_{q^2}\setminus\Fq$, we can fix either $r_2$ or $r_2^q$, meaning there are $\frac{q^2-q}{2}$ choices. This means that in this case there are at most $\frac{q^2-q}2$ possible triples  $(\lambda_0,\lambda_1,\lambda_2)$ for which $r(k)=0$ and $p(x)$ has an irreducible factor of degree $2$.
			\end{enumerate}
			Using these upper bounds, we can get a lower bound on the number of triples where $p(x)$ is irreducible and $k$ is a root of $r(x)$:\[
			\left\lvert\{p(x)\text{ is irreducible over }\Fq : r(k)=0\}\right\rvert\geq q^2-3-(q-1)-\frac{q^2-q}6-\frac{q^2-q}2=\frac{q(q-1)}3-2.
			\]
			Summing over all $k\in\Fq\setminus\{0\}$ gives us:\[
			\frac{q(q-1)^2}{3}-2(q-1)\leq\sum_{k\in\F_q^*}|\{(\lambda_0,\lambda_1,\lambda_2) : r(k)=0,\, p \text{ is irreducible}\}|\leq\]
			\[3|\{(\lambda_0,\lambda_1,\lambda_2) : p\text{ is irreducible},\, r\text{ is reducible}\}|,
			\]
			where we obtained the last inequality because $r$ can have at most $3$ roots in $\F_q$, so each pair $(p,r)$, with $p$ irreducible and $r$  reducible, is counted at most $3$ times. Dividing by $3$ gives the inequality of the statement.
		\end{proof}
		
		Now we are ready to prove our main claim.
		
		\begin{Prop}
			Let $q>10$ be a prime power. There exists an $\alpha\in\F_{q^3}\setminus\Fq$ and an $f \colon \F_{q^3}\rightarrow\Fq$ linear functional such that the triple $(\alpha,1,f)$ satisfies the conditions of Theorem \ref{3fold_construction}.
		\end{Prop}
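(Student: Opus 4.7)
The plan is to fix $\beta := 1$, set $f(1) := 1$ and $f(\alpha) := 0$, and treat $c := f(\alpha^2) \in \F_q$ as the sole parameter to tune. Condition~(1) of Theorem~\ref{3fold_construction} then reads $f(\alpha) \neq 1$, which holds automatically. The crucial identity is
\[
m(-k) + r(k) = k \quad \text{for every } k,
\]
obtained by adding the cubics $m(x) = x^3 - \lambda_2 x^2 - \lambda_1 x - \lambda_0$ (the minimal polynomial of $\alpha$) and $r(x) = x^3 + \lambda_2 x^2 - (\lambda_1 - 1) x + \lambda_0$ term by term. Combined with Lemma~\ref{frac_calc} (which, for this $m$, gives $c_{1,k} = -(k+\lambda_2)$ and $c_{2,k} = k^2 + \lambda_2 k - \lambda_1$), a short calculation shows that Condition~(2) at $k \in \F_q$ is equivalent to
\[
(c-k)\,r(k) + k\bigl(k^2 + (1+\lambda_2)k - \lambda_1\bigr) \;\neq\; 0.
\]

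The coefficient of $c$ in this inequality is precisely $r(k)$, which is why Lemma~\ref{lambda} is the right existence tool. If $k_0 \in \F_q$ is a root of $r$, then $c$ drops out and the condition becomes $k_0\bigl(k_0^2 + (1+\lambda_2)k_0 - \lambda_1\bigr) \neq 0$; irreducibility of $p$ forces $\lambda_0 \neq 0$ and hence $k_0 \neq 0$ (since $r(0) = \lambda_0$), so this reduces to $k_0^2 + (1+\lambda_2)k_0 - \lambda_1 \neq 0$. For every other $k \in \F_q$ the inequality is linear in $c$ with nonzero coefficient and excludes exactly one value of $c$. I therefore aim to pick $(\lambda_0, \lambda_1, \lambda_2)$ from the triples produced by Lemma~\ref{lambda} satisfying the \emph{additional} property that no root of $r$ in $\F_q$ is also a root of $x^2 + (1+\lambda_2)x - \lambda_1$. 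Eliminating $\lambda_1$ from the system $k_0^2 + (1+\lambda_2)k_0 - \lambda_1 = 0$, $r(k_0) = 0$ forces $\lambda_0 = k_0(k_0 - 1)$ and $\lambda_1 = k_0(k_0 + 1 + \lambda_2)$; for $k_0 \in \{0, 1\}$ this yields $\lambda_0 = 0$, contradicting $p$ irreducible, so at most $q(q-2)$ triples are bad.

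Comparing counts, Lemma~\ref{lambda} furnishes at least $q(q-1)^2/9 - 2(q-1)/3$ triples with $p$ irreducible and $r$ reducible, so after discarding the at most $q(q-2)$ bad ones at least $(q^3 - 11q^2 + 13q + 6)/9$ good triples remain; this expression is positive precisely when $q \geq 11$, i.e.\ $q > 10$, matching the hypothesis. Fixing any such triple and taking $\alpha$ to be a root of $p$ in $\F_{q^3}$, the displayed inequality excludes at most one value of $c$ for each $k \in \F_q$ with $r(k) \neq 0$, and there are at most $q - 1$ such $k$ because $r$ has at least one root in $\F_q$. Hence some $c \in \F_q$ works, and the resulting $f$ satisfies both conditions of Theorem~\ref{3fold_construction}.

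The main obstacle is spotting the right ansatz: the choice $f(1)=1$, $f(\alpha)=0$ is exactly what makes $r(k)$ appear as the coefficient of $c$, aligning Condition~(2) with Lemma~\ref{lambda}. Once this is recognized, the elementary counting of bad triples and the inequality $q^3 - 11q^2 + 13q + 6 > 0$ for $q \geq 11$ complete the argument.
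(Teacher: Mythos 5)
Your proposal is correct and follows essentially the same route as the paper: the same ansatz $\beta=1$, $f(1)=1$, $f(\alpha)=0$ with the value at $\alpha^2$ as the free parameter, the same reduction via Lemma \ref{frac_calc} showing that a root of $r$ makes condition (2) independent of that parameter, and the same reliance on Lemma \ref{lambda} to supply a triple with $p$ irreducible and $r$ reducible. The only difference is bookkeeping: you count the bad minimal-polynomial triples directly by eliminating to $\lambda_0=k_0(k_0-1)$ (at most $q(q-2)$ of them), whereas the paper bounds the bad $\alpha$'s as roots of $q(q-1)$ cubics; both comparisons land at the same threshold $q>10$.
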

		\begin{proof}
			First, take an arbitrary $\alpha\in\F_{q^3}\setminus\Fq$, we will specify the precise conditions on $\alpha$ later. Define $f$ on the basis $1,\alpha,\alpha^2$ by $f(1)=1$, $f(\alpha)=0$, and  $f(\alpha^2)=l$, where $l\in\Fq$ will be  chosen later. The conditions that $f$ needs to satisfy (after substituting $\beta=1$ and the known values of $f$) are:\begin{enumerate}
				\item $f(\alpha)\not=1$ (the first condition of Theorem \ref{3fold_construction}),
				\item $0\not=l$ (the second condition of Theorem \ref{3fold_construction} with $k=0$),
				\item $k\neq l+kf\left(\frac{1}{\alpha+k}\right)$ for all $k\in \F_q^*$, or equivalently, 
				\begin{equation}
					\label{fin2}
					f\left(\frac{1}{\alpha+k}\right)\not=1-\frac{l}k,
				\end{equation} 
			\end{enumerate}
			for all $k\in \F_q^*$ (the second condition of Theorem \ref{3fold_construction} with $k\in \F_q^*$). 
			
			\medskip
			
			For a fixed $k\in\Fq^*$, we can write \begin{equation}
				\label{eqx}
				\frac1{\alpha+k}=c_k+a_k\alpha+b_k\alpha^2,
			\end{equation} with $a_k,b_k,c_k\in\Fq$, since $\{1,\alpha,\alpha^2\}$ is an $\F_q$-basis of $\F_{q^3}$. Applying $f$, we obtain:
			\begin{equation}
				\label{fin3}
				f\left(\frac1{\alpha+k}\right)=c_k+lb_k,
			\end{equation}
			so condition \eqref{fin2} reads as  $c_k+lb_k\not=1-\frac{l}k$, or equivalently, $(b_k+\frac1k)l\not=1-c_k$. This means that a triple $(\alpha,1,f)$ cannot  satisfy the conditions of Theorem \ref{3fold_construction} if there exists $k\in \F_q^*$ such that $b_k=-\frac1k$ and $c_k=1$. So, our first condition on $\alpha$ is that for all $a\in\Fq$, $k\in\Fq^*$:
			\[
			\frac1{\alpha+k} \neq 1+a\alpha-\frac1k\alpha^2,
			\] 
			cf. \eqref{eqx}. For $a\in \F_q$ and $k\in \F_q^*$ this is a cubic equation in $\alpha$ (after multiplying by $\alpha+k$), thus it has at most $3$ roots, so the number of choices for $\alpha$ that we have to exclude is bounded above by $q+3q(q-1)$ (the roots of these $q(q-1)$ polynomials and the elements of $\Fq$). 
			
			Choosing $\alpha$ like this means that if for some $k\in \F_q^*$ it holds that $b_k=-\frac1k$, then, for this $k$, all $l\in \F_q$ satisfy \eqref{fin2}.
			If $b_k\not=-\frac1k$, then $(b_k+\frac1k)l=1-c_k$ is linear in $l$, thus, for this $k$, there is exactly one $l\in\Fq$ which doesn't satisfy \eqref{fin2}. If we can choose $\alpha$ such that for some $k\in\Fq^*$ we have $b_k=-\frac1k$, then, out of the $q-1$ possible nonzero values of $l$, there are at most $q-2$ values which, for some $k$, violate \eqref{fin2}. So, in this case we can choose $l$ such that $(\alpha,1,f)$ satisfies the conditions of Theorem \ref{3fold_construction}.
			
			\medskip
			
			We now uncover what $b_k=-\frac1k$ actually means. It is equivalent to: 
			\[
			\frac1{\alpha+k}=c_k+a_k\alpha-\frac1k\alpha^2\Leftrightarrow \alpha^3-k(a_k-1)\alpha^2-k(a_kk+c_k)\alpha-k(c_kk-1)=0.
			\]
			Thus $\alpha$ is a root of the monic cubic $x^3-k(a_k-1)x^2-k(a_kk+c_k)x-k(c_kk-1) \in \F_q[x]$. If we let the minimal polynomial of $\alpha$ be $p(x)=x^3-\lambda_2x^2-\lambda_1x-\lambda_0$, then $\lambda_2=k(a_k-1)$, $\lambda_1=k(a_kk+c_k)$ and $\lambda_0=k(c_kk-1)$. Eliminating $a_k$ and $c_k$ and solving for $k$, we obtain 
			\[
			k^3+\lambda_2k^2-(\lambda_1-1)k+\lambda_0=0,
			\]
			which is equivalent to the polynomial $r(x)=x^3+\lambda_2x^2-(\lambda_1-1)x+\lambda_0$ having a root in $\Fq$, i.e. being reducible.
			So, to have $b_k=-\frac1k$ for some $k\in \F_q^*$, we have to find a triple $(\lambda_0,\lambda_1,\lambda_2)\in \F_q^3$ such that $p(x)=x^3-\lambda_2x^2-\lambda_1x-\lambda_0$ is irreducible, and $r(x)=x^3+\lambda_2x^2-(\lambda_1-1)x+\lambda_0$ is reducible. By Lemma \ref{lambda}, there exist at least $\frac{q(q-1)^2}{9}-\frac{2(q-1)}3$ such triples. We can choose $\alpha$ to be any root of any such $p(x)$. Since these polynomials are irreducible and pairwise distinct, each of them has $3$ conjugate roots, and all of these roots are distinct, so there are at least $\frac{q(q-1)^2}{3}-{2(q-1)}$ choices for  $\alpha$.
			
			\medskip
			
			Note that in \eqref{fin3} we already had some conditions on $\alpha$. If $\frac{q(q-1)^2}{3}-2(q-1)>q+3q(q-1)$, then we can certainly choose an $\alpha$ for which there exists a suitable $l$, and hence $f$, such that $(\alpha,1,f)$ satisfies the conditions of Theorem \ref{3fold_construction}. This inequality holds for $q>10$, which proves our claim. 
		\end{proof}
		
		\begin{Rem}
			The prime powers below $11$ are $2,3,4,5,7,8,9$. In these cases, one can find a suitable $\alpha$ such that the conditions hold with $l=1$, using a simple brute-force computer program.
		\end{Rem}
		
		\begin{Cor}\label{3fold_main}
			For all prime powers $q$, there exists a $3$-fold blocking set of size $3(q^h+q^{h-1}+1)$ in $\FGq$, $h\geq 2$.
		\end{Cor}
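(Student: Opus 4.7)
The plan is to assemble the corollary by case analysis on $h$, invoking the results already established in the section. The key observation is that once we produce a triple $(\alpha,\beta,f)$ satisfying the conditions of Theorem \ref{3fold_construction}, the sets $\cL$, $\varphi(\cL)$, $\varphi^2(\cL)$ are automatically pairwise disjoint (as argued before the statement of Theorem \ref{3fold_construction}, disjointness of $\cL$ and $\varphi(\cL)$ implies all three pairwise disjointness via applying $\varphi$). Each of the three sets has size $q^h+q^{h-1}+1$ by Lemma \ref{uvw}, so their union has the desired cardinality $3(q^h+q^{h-1}+1)$.

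First I would dispose of $h=2$. In this case the claim is not proved via the $(\alpha,\beta,f)$ machinery at all; instead one uses the fact that $\PG(2,q^2)$ admits a partition into $q^2-q+1$ pairwise disjoint Baer subplanes (as recalled in the first paragraph of Section \ref{sec:3fold} with the reference to \cite{BSP}). Each Baer subplane is a blocking set of size $q^2+q+1 = q^h+q^{h-1}+1$, so picking any three of them from the partition yields a $3$-fold blocking set of the required size.

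For $h\geq 4$, I would directly invoke the proposition proved in the subsection on $h\geq 4$: choosing any $\alpha\in\Fqh\setminus\Fq$ with $\Fqh=\Fq(\alpha)$ and $\beta=1$, that proposition furnishes at least $q-1$ pairs $(s,t)\in\Fq^2$ for which the functional $f$ defined by \eqref{hgeq4_f_def} makes $(\alpha,1,f)$ satisfy the conditions of Theorem \ref{3fold_construction}. Fixing any such triple, the sets $\cL, \varphi(\cL), \varphi^2(\cL)$ are pairwise disjoint blocking sets of size $q^h+q^{h-1}+1$ each, and their union is the desired $3$-fold blocking set.

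For $h=3$, I would split further. If $q>10$, the proposition proved in the $h=3$ subsection gives a triple $(\alpha,1,f)$ satisfying Theorem \ref{3fold_construction}, and we conclude exactly as above. For $q\in\{2,3,4,5,7,8,9\}$, I would appeal to the remark following that proposition, where the existence of a suitable $\alpha$ (with $l=1$) is verified by direct computer search. The main conceptual obstacle has already been absorbed into the $h=3$ proposition (the delicate counting of triples $(\lambda_0,\lambda_1,\lambda_2)$ via Lemma \ref{lambda}), so the proof of the corollary is essentially a bookkeeping step that glues together the $h=2$ Baer-partition argument, the $h\geq 4$ construction, and the $h=3$ construction together with the finite-field check for small $q$.
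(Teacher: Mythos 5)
Your proposal is correct and follows essentially the same route as the paper: the corollary is assembled from the $h=2$ Baer-subplane partition, the $h\geq 4$ proposition, the $h=3$ proposition for $q>10$, and the computer check for $q\leq 10$, with Theorem \ref{3fold_construction}, the orbit argument under $\varphi$, and Lemma \ref{uvw} supplying disjointness and the size count exactly as you describe.
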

		
		Theorem \ref{3fold_qh} follows from 
		Corollaries \ref{2fold_main} and \ref{3fold_main}. 
		
		\section*{Acknowledgement}
		
		All authors would like to thank the Budapest REU 2025 Math Program for undergraduate research.

		\vspace{1cm}
		
		\noindent Bence Csajb\'{o}k,\\
		Department of Computer Science\\
		ELTE E\"otv\"os Lor\'and University\\
		H-1117 Budapest, P\'azm\'any P.\ stny.\ 1/C, Hungary.\\ E-mail: {\texttt{bence.csajbok@ttk.elte.hu}}
		
		\vspace{1cm}
		
		\noindent M\'at\'e R\'obert Kepes, Eszter Melinda Robin,\\
		ELTE E\"otv\"os Lor\'and University\\
		H-1117 Budapest, P\'azm\'any P.\ stny.\ 1/C, Hungary.\\ E-mail: {\texttt{kepes.mate.robert@gmail.com, robin.eszti@gmail.com}}
		
		\vspace{1cm}
		
		\noindent Bence S\'ogor,\\
		KU Leuven\\
		Naamsestraat 22, 3000 Leuven, Belgium.\\
		\&\\
		Babe\c{s}-Bolyai University\\
		Str. Mihail Kog\u{a}lniceanu 1, 400084 Cluj-Napoca, Romania.\\ E-mail: {\texttt{sogorbence2002@gmail.com}}
		
		\vspace{1cm}
		\noindent Sherry Wang,\\
		Reed College \\
		3203 SE Woodstock Blvd, Portland, OR, USA.\\
		E-mail: \texttt{swang@reed.edu}
		
		\vspace{1cm}
		\noindent Elias Williams,\\
		Lewis \& Clark College \\
		615 S Palatine Hill Road, Portland, OR, USA.\\
		E-mail: \texttt{roark@lclark.edu}

	\end{document}